\documentclass[11pt,reqno]{amsart}
\usepackage{latexsym,amssymb,graphicx,multicol,mathrsfs,color,xypic,amscd, amsxtra, verbatim}

 \setlength{\hoffset}{-.5in}
 \addtolength{\textwidth}{.6in}
 \setlength{\voffset}{-.25in}
 \addtolength{\textheight}{.5in}

\theoremstyle{plain}

\newtheorem{theorem}{Theorem}[section]
\newtheorem{proposition}[theorem]{Proposition}
\newtheorem{lemma}[theorem]{Lemma}
\newtheorem{corollary}[theorem]{Corollary}
\newtheorem*{mainresult}{Main Result}
\newtheorem*{maincorollary}{Corollary}

\theoremstyle{definition}
\newtheorem{remark}{Remark}

\newtheorem{definition}[theorem]{Definition}

\begin{document}
\newcommand{\m}{\ensuremath{m}}
\newcommand{\Exc}{\ensuremath{\text{Exc}}}
\def\ra{\rightarrow}
\def\nb{\nobreakdash}
\def\SM{\overline{\mathcal{M}}}
\def\sigman{\{\sigma_j\}_{j=1}^{n}}
\def\taum{\{\tau_j\}_{j=1}^{m}}
\def\NSM{\widetilde{\mathcal{M}}}
\def\NM{\widetilde{M}}
\def\A{\mathcal{A}}
\def\M{\overline{M}}
\def\N{\text{N}}
\def\C{\mathcal{C}}
\def\Q{\mathbb{Q}}
\def\P{\mathbb{P}}
\def\Exc{\text{Exc\,}}
\def\Sing{\text{Sing\,}}
\def\Proj{\text{Proj\,}}
\def\Pic{\text{Pic\,}}
\def\deg{\text{deg}\,}
\def\Z{\mathbb{Z}}
\def\H{\mathbf{H}}

\title[Ample divisors on $\M_{0,\A}$]{Ample divisors on $\M_{0,\A}$\\ (with applications to log MMP for $\M_{0,n}$)}
\author{Maksym Fedorchuk, David Ishii Smyth}
\maketitle

\maketitle
\begin{abstract}
We introduce a new technique for proving positivity of certain divisor classes on $\M_{0,n}$ and its weighted variants $\M_{0,\A}$. Our methods give an unconditional description of the symmetric weighted spaces $\M_{0,\A}$ as log canonical models of $\M_{0,n}$.
\end{abstract}

\tableofcontents
\pagebreak
\section{Introduction}
In \cite{Hassett2}, Brendan Hassett initiated the problem of studying certain log canonical models of moduli spaces of curves. For any rational number $\alpha$ such that $K_{\SM_{g,n}}+\alpha\Delta$ is an effective divisor on the moduli stack of $n$\nb-pointed genus $g$ curves, we may define
\begin{align*}
\M_{g,n}(\alpha)= \Proj \oplus_{m \geq 0} H^0(\SM_{g,n}, m (K_{\SM_{g,n}}+\alpha\Delta)),
\end{align*}
where the sum is taken over $m$ sufficiently divisible, and ask whether the spaces $\M_{g,n}(\alpha)$ admit a modular description. In the case $g=0$, it is easy to see that $K_{\M_{0,n}}+\alpha \Delta$ is effective if and only if $\alpha>\frac{2}{n-1}$, and Matthew Simpson has described the corresponding models, assuming the $S_{n}$-equivariant $F$-conjecture \cite{Simpson}.

\begin{theorem}[Simpson]\label{T:Simpson} 
Assume that the $S_{n}$-equivariant F-conjecture holds.
\begin{itemize}
\item[1.] If $\alpha \in \Q \cap (\frac{2}{k+2},\frac{2}{k+1}]$ for some $k=1, \ldots, \lfloor \frac{n-1}{2} \rfloor$, then $\M_{0,n}(\alpha) \simeq \M_{0,\A}$,\\
the moduli space of $\A$-stable curves, with $\A=\{\underbrace{1/k, \ldots, 1/k}_{n}\}$.
\item[2.] If $\alpha \in \Q \cap(\frac{2}{n-1},\frac{2}{ \lfloor n/2 \rfloor+1}]$, then $\M_{0,n}(\alpha)=(\P^{1})^n//\text{SL}_{2}$.
\end{itemize}
\end{theorem}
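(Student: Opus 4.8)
The plan is to work entirely inside the $S_n$-invariant subspace of $\Pic(\M_{0,n})_\Q$, which is spanned by the symmetric boundary classes $B_i=\sum_{|S|=i}\delta_{0,S}$ for $2\le i\le\lfloor n/2\rfloor$. First I would expand the log canonical class $D_\alpha:=K_{\M_{0,n}}+\alpha\Delta$ in this basis, using the standard expression for $K_{\M_{0,n}}$ in terms of boundary; the resulting coefficients depend affine-linearly on $\alpha$. Because the $S_n$-equivariant F-conjecture is assumed, nefness of a symmetric class is tested against only the finitely many symmetric F-curves $F_{a,b,c,d}$ indexed by four-part partitions $a+b+c+d=n$. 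I would compute each intersection number $D_\alpha\cdot F_{a,b,c,d}$ as a linear function of $\alpha$ and locate the values of $\alpha$ at which it changes sign.

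The second step is the chamber analysis. I expect the breakpoints to fall exactly at $\alpha=\tfrac{2}{k+1}$: for $\alpha\in(\tfrac{2}{k+2},\tfrac{2}{k+1}]$ every symmetric F-curve satisfies $D_\alpha\cdot F_{a,b,c,d}\ge 0$, so by the equivariant F-conjecture $D_\alpha$ is nef, while the F-curves with $D_\alpha\cdot F=0$ are precisely those spanning the extremal rays contracted by Hassett's reduction morphism $\rho_k\colon\M_{0,n}\to\M_{0,\A}$ with $\A=(1/k)^n$. This morphism contracts exactly the boundary divisors $B_2,\dots,B_k$, namely it collapses every rational tail carrying at most $k$ of the marked points (a tail with marks $S$ is $\A$-unstable iff $|S|\le k$). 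Thus the $D_\alpha$-trivial locus and the $\rho_k$-exceptional locus coincide.

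The third step upgrades nefness to an identification of models. I would show $D_\alpha\sim_\Q\rho_k^{*}H_\alpha+E$, where $H_\alpha$ is an ample $\Q$-divisor on $\M_{0,\A}$ and $E$ is an effective, $\rho_k$-exceptional class, by testing both sides against all F-curves: the contracted ones pin down $E$ and the remaining ones pin down the pullback. Since $\rho_k$ is a birational contraction with $(\rho_k)_{*}\mathcal O=\mathcal O$ and $E$ is effective exceptional, the section rings of $D_\alpha$ and of $\rho_k^{*}H_\alpha$ agree, giving $\M_{0,n}(\alpha)=\Proj R(D_\alpha)=\Proj R(H_\alpha)=\M_{0,\A}$. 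For part (2), once $k$ reaches $\lfloor(n-1)/2\rfloor$ the same computation keeps $D_\alpha$ nef all the way down to $\alpha\to\tfrac{2}{n-1}^{+}$, with the class staying on the same contracted face and merely rescaling; when $n$ is even the extremal weight produces strictly semistable configurations, so the correct model is the GIT quotient $(\P^1)^n/\!/\mathrm{SL}_2$, and I would identify $H_\alpha$ with (a multiple of) the natural polarization from the symmetric linearization.

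The main obstacle is the third step, not nefness itself: the F-conjecture is precisely what places $D_\alpha$ in the nef cone, but extracting the \emph{model} requires showing the nef class is semiample and descends to an ample class with unchanged section ring. The delicate points are verifying effectivity and exceptionality of $E$ (equivalently, controlling the discrepancies of $\rho_k$) and, for part (2) with $n$ even, handling the strictly semistable locus where $\M_{0,\A}$ and $(\P^1)^n/\!/\mathrm{SL}_2$ differ: one must confirm that $D_\alpha$ is trivial on the additional curves contracted by the GIT map so that the Proj produces the quotient rather than the Hassett space.
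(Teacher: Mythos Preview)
Your proposal is essentially Simpson's original argument, which the paper summarizes but does not itself carry out: fix $\phi=\rho_k:\M_{0,n}\to\M_{0,\A}$, show that $(K_{\M_{0,n}}+\alpha\Delta)-\phi^*\phi_*(K_{\M_{0,n}}+\alpha\Delta)$ is effective (your $E$), and use the $S_n$-equivariant F-conjecture to check that $\phi^*\phi_*(K_{\M_{0,n}}+\alpha\Delta)$ is nef on $\M_{0,n}$ with zero locus exactly the $\phi$-exceptional curves, whence $\phi_*(K_{\M_{0,n}}+\alpha\Delta)$ is ample on $\M_{0,\A}$. Your decomposition $D_\alpha\sim_\Q\rho_k^*H_\alpha+E$ and the section-ring identification are a repackaging of these two steps; the paper cites Simpson's thesis for the details.

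What the paper actually \emph{proves} is different in method and stronger in conclusion: it drops the F-conjecture hypothesis entirely and establishes ampleness of $\phi_*(K_{\M_{0,n}}+\alpha\Delta)\sim D_k(c)$ directly on $\M_{0,\A}$. Rather than testing against F-curves on $\M_{0,n}$, it takes an arbitrary one-parameter family of $\A$-stable curves, resolves and blows the total space down to a $\P^1$-bundle, and tracks a ``sums-of-squares'' function $G_c(i)$ built from the sections through the sequence of blow-downs; monotonicity of this function forces $D_k(c).B>0$. Induction over the boundary stratification (Lemma~\ref{L:BoundaryInduction}) and a perturbation argument (Section~\ref{S:Perturbations}) then place $D_k(c)$ in the interior of the nef cone. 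So while your outline is a faithful reconstruction of Simpson's conditional proof, it is not the argument this paper supplies; the paper's point is precisely to replace your F-curve computation with an unconditional positivity technique.
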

In addition, when $k> \lceil \frac{n}{3} \rceil$, Simpson has given an unconditional proof of Theorem \ref{T:Simpson} by constructing the corresponding spaces $\M_{0,\A}$ as inverse limits of GIT quotients.
\begin{theorem}[Simpson]\label{T:Simpson2}
If $k>\lceil \frac{n}{3} \rceil$, the conclusions of Theorem \ref{T:Simpson} hold without assuming the $S_{n}$-equivariant $F$-conjecture.
\end{theorem}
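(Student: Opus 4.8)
The plan is to realize $K_{\M_{0,n}} + \alpha\Delta$ as the pullback of an ample class under Hassett's reduction morphism, so that its section ring computes $\M_{0,\A}$ directly. First I would use the reduction morphism $\rho_\A \colon \M_{0,n} \ra \M_{0,\A}$, which contracts exactly the boundary divisors $\delta_{0,S}$ with $2 \le |S| \le k$, i.e.\ those along which up to $k$ points (of total $\A$-weight $\le 1$) are permitted to collide. Writing $\delta_j$ for the symmetric boundary class indexed by $|S| = j$, a direct computation with the standard expression for $K_{\M_{0,n}}$ in boundary classes shows that the coefficient of $\delta_j$ in $K_{\M_{0,n}} + \alpha\Delta$ is nonpositive for $j \le k$ and strictly positive for $k < j \le \lfloor n/2\rfloor$ precisely when $\alpha \in (\tfrac{2}{k+2}, \tfrac{2}{k+1}]$; this is the boundary bookkeeping that both fixes the range of $\alpha$ and matches the locus contracted by $\rho_\A$. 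I would then exhibit a class $D$ on $\M_{0,\A}$ with $\rho_\A^\ast D \sim_\Q K_{\M_{0,n}} + \alpha\Delta$, whence, once $D$ is known to be ample, the projection formula together with the fact that $\rho_\A$ contracts exactly the $(K_{\M_{0,n}}+\alpha\Delta)$-trivial curves identifies $\Proj \bigoplus_m H^0(\M_{0,n}, m(K_{\M_{0,n}}+\alpha\Delta))$ with $\M_{0,\A}$.

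Everything therefore reduces to the ampleness of $D$. In the conditional proof of Theorem~\ref{T:Simpson} this is where the $S_n$-equivariant $F$-conjecture is invoked: symmetric nefness is tested against $F$-curves, and one checks the resulting inequalities by hand. To prove Theorem~\ref{T:Simpson2} unconditionally I would instead obtain ampleness from geometric invariant theory, and this is exactly where the hypothesis $k > \lceil n/3\rceil$ does its work. The hypothesis is equivalent to the total weight $n/k$ being strictly less than $3$, and this forces a strong structural rigidity: since a tail component carries a single node and hence must support marked weight strictly greater than $1$, a genus zero $\A$-stable curve of total weight below $3$ can have at most two tails and is therefore a chain of $\P^1$'s. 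Chains are simple enough to be separated by finitely many linearizations on $(\P^1)^n$: for each subset $T$ with $|T| = k+1$ one chooses an asymmetric linearization making the simultaneous collision of $T$ unstable, and the inverse limit (common refinement) of the resulting GIT quotients $(\P^1)^n /\!/ \mathrm{SL}_2$ is isomorphic to $\M_{0,\A}$, with $D$ the polarization descended from these quotients. Since a descended GIT polarization is ample, this yields the ampleness of $D$ for free.

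The hard part will be the last identification: verifying that the inverse limit of these GIT quotients is genuinely $\M_{0,\A}$. Concretely I would need to check that the chosen family of linearizations jointly separates all chains — so that the natural rational map from $\M_{0,\A}$ to the inverse limit is a morphism that is both injective on points and an isomorphism onto its image — and that one can arrange the linearizations to avoid the strictly semistable loci that would obstruct descent of the polarization. This is precisely the step that collapses when $n/k \ge 3$, since then three-tailed curves appear and no finite collection of single-$\P^1$ GIT problems can resolve them; the bound $k > \lceil n/3\rceil$ is exactly what keeps the moduli problem within reach of the GIT construction.
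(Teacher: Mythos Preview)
Your proposal correctly reconstructs Simpson's original argument, and the paper explicitly attributes exactly this strategy to him: ``Simpson has given an unconditional proof \ldots\ by constructing the corresponding spaces $\M_{0,\A}$ as inverse limits of GIT quotients.'' So as a sketch of Simpson's proof your outline is on target, including the key observation that $k>\lceil n/3\rceil$ forces total weight $<3$ and hence chain-type combinatorics amenable to GIT.

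However, the paper itself does not prove Theorem~\ref{T:Simpson2} by this route; it merely cites Simpson. The paper's own contribution is a completely different method that proves the \emph{stronger} statement (all $k$, not just $k>\lceil n/3\rceil$), and hence yields Theorem~\ref{T:Simpson2} as a special case. Their approach is direct intersection theory on one-parameter families: they introduce ``sums of squares'' functions $F_\sigma, F_\tau, F_{\sigma,\tau}, F_\Delta$ along a sequence of elementary blow-downs from the total space to a $\P^1$-bundle, and show that a suitable combination is monotone (Lemma~\ref{L:Main}, Proposition~\ref{P:Positivity}). This gives positivity of $D_k(c)=c\psi_\sigma+(2c-1)\Delta_s+\psi_\tau-\Delta$ on every curve, and a perturbation argument places $D_k(c)$ in the interior of the nef cone, hence ample. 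No GIT input, no inverse limits, and no restriction on $k$.

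One technical point in your write-up deserves correction: you assert $\rho_\A^*D \sim_\Q K_{\M_{0,n}}+\alpha\Delta$. That is not what happens. Rather, $(K_{\M_{0,n}}+\alpha\Delta)-\rho_\A^*\rho_{\A*}(K_{\M_{0,n}}+\alpha\Delta)$ is an \emph{effective} $\rho_\A$-exceptional divisor (this is Simpson's step~(1) as summarized in the introduction). The identification of section rings then follows from this effectivity together with ampleness of the pushforward, not from a literal pullback identity.
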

The purpose of this paper is to give an unconditional proof of Theorem \ref{T:Simpson} which is valid for all $k$, thus completing Hassett's proposed log minimal model program for $\M_{0,n}$. Our methods are quite different from Simpson's in that they produce ample divisors independent of any input from geometric invariant theory or Kollar's results on positivity of push-forwards of dualizing sheaves \cite{Kollar}. Our methods are applicable without knowing \emph{a priori} that $\M_{0,\A}$ is projective, and can thus be viewed as an elementary, characteristic-indepenent proof of the projectivity of $\M_{0,\A}$, as well as the finite-generation of all log-canonical section rings $R(\M_{0,n},K_{\M_{0,n}}+\alpha\Delta)$. In forthcoming work \cite{Smyth2}, the second author will use similarly direct methods to verify the projectivity of $\M_{1,n}(m)$, the moduli space of genus-one $m$-stable curves \cite{Smyth1}, for which no other proof of projectivity is currently known. Furthermore, the ample divisors produced by explicit one-parameter intersection theory will immediately yield interpretations of certain spaces $\M_{1,n}(m)$ as log-canonical models of $\M_{1,n}$, in the same spirit as Theorem 1.1.

The key ingredient in our argument is a new method for verifying the positivity of certain linear combinations of tautological divisor classes on $\M_{0,\A}$. To get a feel for the method, let us consider the problem: For which values of $c \in \Q$ is the divisor $c\psi-\Delta$ nef on $\M_{0,n}$?

Given a generically smooth $n$-pointed stable curve $(\C \rightarrow B, \{\sigma_i\}_{i=1}^{n})$ over a smooth curve $B$, there exists a sequence of elementary blow-downs:
\[
\C_{0} \rightarrow \C_{1} \rightarrow \cdots \C_{N-1} \rightarrow \C_{N},
\]
where $\C_{0}$ is the minimal desingularization of the total space $\C$, and $\C_{N}$ is a smooth $\P^{1}$-bundle over $B$. Let $\{\sigma^{i}_j\}_{j=1}^{n}$ denote the sections of $\pi_i: \C_{i} \rightarrow B$ obtained as the images of $\sigman$ on $\C_{i}$, and consider the function $G:[0,N] \rightarrow \Q$ defined by
$$
G_c(i):=-\frac{c}{n-1}\sum_{1 \leq j < k \leq n}(\sigma^i_j-\sigma^i_k)^2-\deg(\pi_{i \, *}(\Sing(\pi_i))).
$$
By expanding out the product, one verifies immediately that $G_c(0)=c(\psi.B)-(\Delta.B)$. On the other hand, $G_c(N)=0$ since $\pi_N$ is a smooth $\P^{1}$-bundle, and the difference of any two sections on a $\P^{1}$-bundle is numerically equivalent to a collection of fibers. Thus, if we choose $c \in \Q$ so that the function $G_c(i)$ is decreasing, we conclude that $c(\psi.B)-(\Delta.B)$ is positive.

It is not hard to see that if the exceptional divisor of $\C_{i} \rightarrow \C_{i+1}$ contracts $r$ sections, then
\[
G_c(i)-G_c(i+1)=\frac{cr(n-r)}{n-1}-1.
\]
The hypothesis that $(\C \rightarrow B, \{\sigma_i\}_{i=1}^{n})$ is stable implies that each blow-down contracts $r \geq 2$ sections, and we conclude that 
\[ c \geq \frac{n-1}{2(n-2)} \implies c(\psi.B)-(\Delta.B) \geq 0\]
for any 1-parameter family of stable curves with smooth general fiber. In fact, one can see this inequality in a more direct fashion, simply by rewriting $c\psi-\Delta$ as an effective sum of boundary divisors. The advantage to our approach is that it generalizes directly to the weighted spaces $\M_{0,\A}$. Using the inductive description of the boundary of $\M_{0,\A}$, as well as more sophisticated `sums-of-squares' functions, we will prove positivity statements for various divisors on $\M_{0,\A}$.

In order to understand which divisors are relevant to the log minimal model program for $\M_{0,n}$, let us recall Simpson's proof of Theorem \ref{T:Simpson}: Fixing $\alpha \in \Q \cap (\frac{2}{k+2},\frac{2}{k+1}]$, he considers the divisor $K_{\M_{0,n}}+\alpha\Delta$ and the birational contraction $$\phi: \M_{0,n} \rightarrow \M_{0,\A},\,\,\,\,\,\, \A=\{\underbrace{1/k, \ldots, 1/k}_{n}\}.$$ He shows that
\begin{itemize}
\item[(1)]$(K_{\M_{0,n}}+\alpha\Delta)-\phi^*\phi_*(K_{\M_{0,n}}+\alpha\Delta)$ is effective.
\item[(2)]If the $S_{n}$-equivariant $F$-conjecture holds, then $\phi_*(K_{\M_{0,n}}+\alpha\Delta)$ is ample.
\end{itemize}
Together, (1) and (2) immediately imply the statement of the theorem. The ampleness of $\phi_*(K_{\M_{0,n}}+\alpha\Delta)$ is verified by pulling this divisor back to $\M_{0,n}$, and then using the $F$-conjecture to check that it is nef and contracts only $\phi$-exceptional curves. To obtain an unconditional proof of Simpson's theorem, we will give a direct proof of the ampleness of the divisor $\phi_*(K_{\M_{0,n}}+\alpha\Delta)$ by showing that it lies in the interior of the nef cone of $\M_{0,\A}$.

In Lemma \ref{L:PushforwardClass}, we will see that
\[
\phi_*(K_{\M_{0,n}}+\alpha\Delta) \equiv \psi-2\Delta+\alpha(\Delta+\Delta_{s}),
\]
where $\psi, \Delta, \Delta_{s} \in N^{1}(\M_{0,\A})$ are certain tautological divisor classes on $\M_{0,\A}$. (See Section \ref{S:BasicProperties} for our notational conventions for divisor classes on $\M_{0,\A}$.) Using the idea sketched above and elaborated in Section \ref{S:Positivity}, we show that this linear combination of tautological divisor classes has positive intersection on any curve in $\M_{0,\A}$ (Theorem \ref{T:MainTheorem}).
\begin{mainresult}
If $\alpha \in \Q \cap (\frac{2}{k+2},\frac{2}{k+1}]$, the divisor class
$K_{\M_{0,\A}}+\phi_*\Delta$
has positive intersection on any 1-parameter family of $\A$-stable curves, $\A=\{1/k, \ldots, 1/k\}$.
\end{mainresult}
Minor variations on the proof of Theorem \ref{T:MainTheorem} show that $\psi-2\Delta+\alpha(\Delta+\Delta_{s})$ remains nef when we perturb by a small linear combination of boundary divisors of $\M_{0,\A}.$ Since the boundary divisors generate $\text{Pic}(\M_{0,\A})$, we conclude (Proposition \ref{P:Ampleness} and \ref{P:Ampleness2})
\begin{maincorollary}
If $\alpha \in \Q \cap (\frac{2}{k+2},\frac{2}{k+1}]$, the divisor class $K_{\M_{0,\A}}+\phi_*\Delta$ lies in the interior of the nef cone of $\M_{0,\A}$, and is therefore ample. In particular, Theorem 1.1 holds without assuming the $S_{n}$-equivariant $F$-conecture.
\end{maincorollary}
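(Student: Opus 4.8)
The plan is to upgrade the numerical positivity of $D := K_{\M_{0,\A}}+\phi_*\Delta \equiv \psi-2\Delta+\alpha(\Delta+\Delta_s)$ supplied by Theorem \ref{T:MainTheorem} to the stronger statement that $[D]$ lies in the \emph{interior} of the nef cone, and then to deduce ampleness. Since the boundary divisors $B_1,\dots,B_\rho$ generate $\Pic(\M_{0,\A})_{\Q}$ and hence span $N^{1}(\M_{0,\A})$, a convex-geometry reduction shows that it suffices to produce, for each index $i$ and each sign, a positive rational $\epsilon$ for which $D\pm\epsilon B_i$ is still nef; a small ball about $[D]$ is then contained in $\text{Nef}(\M_{0,\A})$, so $[D]$ is interior, and ampleness will follow from Kleiman's criterion. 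I would therefore organize the argument as: (i) a perturbation of the proof of Theorem \ref{T:MainTheorem}, carried out in Propositions \ref{P:Ampleness} and \ref{P:Ampleness2}; followed by (ii) the passage from interior-of-nef to ample.

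For step (i) I would fix a generically $\A$-stable one-parameter family over a smooth base $B$ and recall the sums-of-squares function of Section \ref{S:Positivity}, whose value on the minimal model computes $(D\cdot B)$, whose value on the terminal $\P^1$-bundle vanishes, and whose consecutive differences across each elementary blow-down were shown to be nonnegative. Intersecting the perturbing class $\pm\epsilon B_i$ against the family alters each of these differences by a correction term, and the crux is that this correction can be bounded \emph{linearly in $\epsilon$ and uniformly in the family}: at each blow-down the relevant combinatorial data — the number $r$ of sections that collide and the local contribution of $B_i$ — range over a finite set, so there is a single threshold below which every difference stays nonnegative. Concretely, I would define a modified function $G^{\pm\epsilon}$ incorporating $\pm\epsilon B_i$, verify that its endpoint values again compute $\bigl((D\pm\epsilon B_i)\cdot B\bigr)$ and $0$, and check that for small $\epsilon$ its consecutive differences remain nonnegative; telescoping then yields $(D\pm\epsilon B_i)\cdot B\geq 0$ for every family, i.e. nefness of $D\pm\epsilon B_i$.

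The main obstacle is exactly this uniformity. Strict positivity of $(D\cdot B)$ on curves, which Theorem \ref{T:MainTheorem} already gives, does \emph{not} by itself place $[D]$ in the interior of the nef cone, since a priori the ratio $(B_i\cdot B)/(D\cdot B)$ could be unbounded as the family varies; interior membership requires the slack in the inequalities to be controlled independently of $B$, and this is precisely what the blow-down-by-blow-down structure of the sums-of-squares estimate provides. I expect the delicate points to be the blow-downs that collide the minimal admissible number of sections, where the difference $G(i)-G(i+1)$ is smallest, together with the separate treatment required for the two distinguished boundary classes $\Delta$ and $\Delta_s$, whose underlying weighted-pointed degenerations behave differently. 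Here one needs the hypothesis $\alpha\in(\tfrac{2}{k+2},\tfrac{2}{k+1}]$ to guarantee that even the tightest inequality carries strictly positive slack, leaving room to absorb the perturbation.

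Finally, having placed $[D]$ in the interior of $\text{Nef}(\M_{0,\A})$, I would conclude that $D$ is ample: $\M_{0,\A}$ is a complete variety, and Kleiman's theorem identifies the interior of the nef cone with the ample cone, so $D=K_{\M_{0,\A}}+\phi_*\Delta$ is ample. Combined with Lemma \ref{L:PushforwardClass} and the effectivity of $(K_{\M_{0,n}}+\alpha\Delta)-\phi^*\phi_*(K_{\M_{0,n}}+\alpha\Delta)$ from Simpson's argument, this identifies $\M_{0,\A}$ with $\M_{0,n}(\alpha)$ and thereby establishes Theorem \ref{T:Simpson} without recourse to the $S_n$-equivariant $F$-conjecture.
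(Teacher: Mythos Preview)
Your overall strategy---perturb the sums-of-squares estimate to show that $D_k(c)\pm\epsilon B_i$ remains nef, then invoke Kleiman---is exactly the paper's approach, and your convex-geometry reduction is valid. But two structural ingredients are missing from your outline, and without them the argument does not close.

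First, your discussion treats only families with smooth generic fiber. For a curve $B$ lying entirely in a boundary stratum, the reduction of Lemma~\ref{L:BoundaryInduction} does \emph{not} apply to the perturbed divisor: the classes $\psi_\sigma$, $\Delta_s$, $\psi_\tau-\Delta$ restrict compatibly to the factors of a product stratum, but an individual boundary divisor $B_i$ does not. The paper handles this by induction on $\dim\M_{0,\A}$: once $D_k(c)$ is known to be \emph{ample} on each smaller factor, that ampleness absorbs whatever the restriction of $\epsilon B_i$ contributes. (Relatedly, the paper passes to the quotient by $S_n\times S_m$ so that only the symmetric classes $\Delta_{i,j}$ need perturbing; your version with all $B_i$ would also work, but the symmetric reduction is what makes Lemma~\ref{L:PerturbationLemma} clean.)

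Second, your slack argument breaks at the endpoint $\alpha=\frac{2}{k+1}$, i.e.\ $c=\frac{k+1}{2k}$. The perturbation in Proposition~\ref{P:Ampleness} works by writing $D_k(c')=\lambda D_k(c_0)+(1-\lambda)D_k(\tfrac{k+1}{2k})$ with $c_0\le\frac{k+2}{2k+2}$ and $\lambda>0$, and absorbing the perturbation into the first summand; at the endpoint $\lambda=0$ and there is nothing to absorb into. The paper therefore treats this case separately (Proposition~\ref{P:Ampleness2}): it shows instead that $D_k(\tfrac{k+1}{2k}+\epsilon)$ is nef, using the replacement morphism $\chi$ of Lemma~\ref{L:PullbackX} for curves lying in $\phi(\Exc\phi)$ and the pullback to $\M_{0,\A^{k-1}}$ (where Proposition~\ref{P:Ampleness} at level $k-1$ now applies, since $\tfrac{k+1}{2k}+\epsilon$ lies just above the lower threshold there) for the remaining curves. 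Your outline does not anticipate this bifurcation.
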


Let us give a brief outline of the contents of this paper. In Section \ref{S:BasicProperties}, we establish the notation that we will use for doing intersection theory on the spaces $\M_{0,\A}$. In particular, we define a number of tautological divisor classes on the weighted spaces $\M_{0,\A}$, and describe how they push-forward and pull-back under the natural reduction maps $\phi_{\A,\A'}:\M_{0,\A} \rightarrow \M_{0,\A'}$. In Section \ref{S:Positivity}, we focus on proving positivity statements for certain divisor classes on a 1-parameter family of $\A$-stable curves with smooth general fiber. It seems likely that the methods described here could be used to produce a number of new nef divisors on $\M_{0,n}$ or $\M_{0,\A}$, but in this paper we will simply focus on the divisor classes which are relevant for the log minimal model program. In Section 4, we prove {\em ampleness} of certain linear combinations of tautological divisors 
on spaces $\M_{0,\A}$, establishing Theorem \ref{T:MainTheorem} as a corollary.\\

\textbf{Acknowledgements.} The authors would like to thank Matthew Simpson for sharing a draft of his thesis, and for several informative conversations regarding its contents.

\section{Preliminaries on $\M_{0,\A}$}\label{S:BasicProperties}
In this section, we recall several facts about $\M_{0,\A}$ and establish the notation we will use for doing intersection theory on these spaces. We work over a fixed algebraically closed field $k$ (the characteristic of $k$ plays no role in our arguments whatsoever).

For any weight vector $\A=(a_1, \ldots, a_n) \in [0,1]^{n} \cap \Q$ satisfying $\sum_{i=1}^{n}a_i>2$, there exists a smooth projective variety $\M_{0,\A}$, which is a fine moduli space for the moduli problem of $\A$-stable curves of genus zero \cite{Hassett1}. Recall that a complete connected reduced nodal curve with $n$ smooth marked points $(C,p_1, \ldots, p_n)$ is $\A$-stable provided that:
\begin{itemize}
 \item[1.] If $p_{i_1}= \ldots= p_{i_k} \in C,$ then $\sum_{j=1}^{k}a_{i_j} \leq 1$.
\item[2.] $\omega_{C}(a_1p_1+\ldots+a_np_n)$ is ample.
\end{itemize}

The boundary $\M_{0,\A}\setminus M_{0,n}$ consists of divisors whose generic points parameterize curves where two sections collide, the union of these is denoted $\Delta_s$, and of divisors whose
generic point is a rational curve with two irreducible components and marked points $S_1\subset \{p_1,\dots, p_n\}$ on one component and marked points $S_2:=\{p_1,\dots,p_n\}\setminus S_1$ on the other component; these divisors are denoted $\Delta_{S_1,S_2}$.

For any pair of weight vectors $\A, \A'$ satsifying $a_{i} \leq a_i'$ for each $i=1, \ldots, n$, there is a birational reduction morphism
$$
\phi_{\A,\A'}:\M_{0,\A} \rightarrow \M_{0,\A'}.
$$
Note that the reduction morphism $\M_{0,n} \rightarrow \M_{0,\{1/2, \ldots, 1/2\}}$ is an isomorphism, while for $2 \leq k \leq \lfloor \frac{n-1}{2} \rfloor-1$, the morphism $\M_{0,\{1/k, \ldots, 1/k\}} \rightarrow \M_{0,\{1/(k+1), \ldots, 1/(k+1)\}}$ contracts all boundary divisors $\Delta_{S_1,S_2}$ satisfying $|S_1|\leq k+1$ or $|S_2| \leq k+1$.

We shall be concerned exclusively with weight vectors of the form
\begin{align*}
\A^{k}_{n,m}&:=\{\underbrace{1/k, \ldots, 1/k}_n, \underbrace{1, \ldots, 1}_m\},
\end{align*}
where $m \geq 0$, $k \geq 1$, and $m+n/k>2$. (These are the conditions under which one obtains a non-empty moduli problem.) Indeed, from now on, whenever we speak of a \emph{weight vector $\A$}, we mean that $\A=\A_{n,m}^{k}$ with $n,m,k$ satisfying these conditions. We will sometimes use the abbreviation $\A_{n}^{k}:=\A_{n,0}^{k}$.

The necessity of considering the $\A^{k}_{i,j}$-weight vectors stems from the inductive description of the boudary of $\M_{0,\A^{k}_n}$. For example, if $[n]=S_1 \cup S_2$ is a partition of $[n]$ into two subsets, with $|S_1|=n_1$ and $|S_2|=n_2$, then one has a natural isomorphism
$$\M_{0,\A^{k}_{n_1,1}} \times \M_{0,\A^{k}_{n_2,1}} \rightarrow \Delta_{S_1,S_2} \subset \M_{0,\A^{k}_n}.$$

Since $\M_{0,\A}$ is a smooth, there is a canonical isomorphism between $\text{Pic\,}(\M_{0,\A})$ and the group of Weil divisors modulo linear equivalence. Furthermore, linear equivalence and numerical equivalence on $\M_{0,\A}$ coincide, so we will typically consider all line-bundles and divisors on $\M_{0,\A}$ as divisor classes in $N^{1}(\M_{0,\A})$, defined up to numerical equivalence. Each reduction morphism $\phi_{\A,\A'}$ gives rise to well-defined push-forward and pull-back maps  on the space of divisors modulo numerical equivalence, induced by push-forward of cycles and pull-back of line bundles, respectively.

For $\A:=\A_{n,m}^{k}$, we let $(\pi:\C \rightarrow \M_{0,\A}, \{\sigma_i\}_{i=1}^{n}, \{\tau_j\}_{j=1}^{m})$ denote the universal curve, and define the following divisor classes (note that $\psi_{\tau}=0$ when $m=0$):
\begin{align*}
\psi_{\sigma}:&=-\sum_{i=1}^{n}\pi_*(\sigma_i^2),\\
\psi_{\tau}:&=-\sum_{i=1}^{m}\pi_*(\tau_i^2),\\
\Delta_{s}:&=\sum_{1 \leq i < j \leq n} \pi_*(\sigma_i.\sigma_j),\\
\Delta:&=\pi_*(\Sing(\pi)).\\
\end{align*}
Let us consider how these divisor classes push-forward under the natural reduction morphisms.
\begin{lemma}[Push-forward formulae]\label{L:PsiPush}
Let $\phi_{k}: \M_{0,n} \rightarrow \M_{0,\A_{n}^{k}}$ be the natural reduction morphism ($k \geq 2$). Then
\begin{align*}
(\phi_{k})_{*}\psi &= \psi+2\Delta_s \in N^{1}\bigl(\M_{0,\A_{n}^{k}}\bigr),\\
(\phi_{k})_{*}\Delta&= \Delta+\Delta_s \in N^{1}\bigl(\M_{0,\A_{n}^{k}}\bigr).
\end{align*}
\end{lemma}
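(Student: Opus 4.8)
The plan is to exploit that $\phi_k$ is an isomorphism over the interior $M_{0,n}$, so that both formulae are really statements about how the boundary divisors of $\M_{0,n}$ are transported. First I would pin down the fiberwise behavior of $\phi_k$ on each boundary stratum, then push forward $\Delta$ by a direct cycle computation, and finally reduce the $\psi$-formula to a pullback comparison on the universal curves. \emph{Step 1 (geometry of $\phi_k$).} A dimension count shows that a boundary divisor $\Delta_{S,S^c}\subset\M_{0,n}$ with $r:=|S|\le|S^c|$ behaves in one of three ways: if $r\ge k+1$, both tails retain weight exceeding $1$, the node survives, and $\Delta_{S,S^c}$ maps isomorphically onto a nodal boundary divisor of $\M_{0,\A_{n}^{k}}$; if $3\le r\le k$, the $S$-bubble carries positive-dimensional moduli that $\phi_k$ forgets, so its image has codimension $r-1\ge2$ and $\Delta_{S,S^c}$ is $\phi_k$-contracted; and if $r=2$, say $S=\{i,j\}$, the rigid three-pointed bubble is crushed to the point $p_i=p_j$, so $\phi_k$ maps $\Delta_{\{i,j\},S^c}$ birationally onto $\pi_*(\sigma_i\sigma_j)$, a component of $\Delta_s$.

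\emph{Step 2 (the $\Delta$-formula).} Since $\Delta$ is the reduced total boundary $\sum_{S}\Delta_{S,S^c}$ and the push-forward of a prime divisor is its image counted with the degree of $\phi_k$ on it, Step 1 gives $(\phi_k)_*\Delta_{S,S^c}=0$ for $3\le r\le k$, $(\phi_k)_*\Delta_{\{i,j\},S^c}=\pi_*(\sigma_i\sigma_j)$, and $(\phi_k)_*\Delta_{S,S^c}$ equal to the corresponding nodal boundary divisor for $r\ge k+1$. Summing, the terms with $r\ge k+1$ reassemble the nodal boundary $\Delta$ of $\M_{0,\A_{n}^{k}}$, while the terms with $r=2$ reassemble $\sum_{i<j}\pi_*(\sigma_i\sigma_j)=\Delta_s$, yielding $(\phi_k)_*\Delta=\Delta+\Delta_s$.

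\emph{Step 3 (the $\psi$-formula).} Because $\psi=\sum_i\psi_i$ is not supported on the boundary, here I would pass to the universal curves $\pi:\C\to\M_{0,n}$ and $\pi':\C'\to\M_{0,\A_{n}^{k}}$ (again Hassett spaces, obtained by adjoining a point of weight $1$, so $\C=\M_{0,\A_{n,1}^{1}}$ and $\C'=\M_{0,\A_{n,1}^{k}}$), together with the reduction morphism $f:\C\to\C'$ satisfying $\pi'\circ f=\phi_k\circ\pi$ and $f\circ\sigma_i=\sigma_i'\circ\phi_k$. Comparing relative dualizing sheaves gives $\omega_\pi=f^*\omega_{\pi'}+\sum_E a_E E$, the sum over the $f$-exceptional divisors $E$ (the families of contracted bubbles). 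Restricting along $\sigma_i$ and using $\psi_i=\sigma_i^*\omega_\pi$ and $\phi_k^*\psi_i'=\sigma_i^*f^*\omega_{\pi'}$, I get $\psi_i=\phi_k^*\psi_i'+\sum_E a_E\,\sigma_i^*E$, where $\sigma_i^*E$ is supported precisely on those $\Delta_{S,S^c}$ with $i\in S$ and $|S|\le k$. A local analysis of the bubble contraction (on a fiber $\omega_\pi$ has degree $-1$ on the crushed $\P^1$ while $f^*\omega_{\pi'}$ has degree $0$) shows the relevant coefficient is $1$, so $\psi_i-\phi_k^*\psi_i'=\sum_{S\ni i,\,2\le|S|\le k}\Delta_{S,S^c}$. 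Summing over $i$ and applying $(\phi_k)_*$, every term with $|S|\ge3$ dies by Step 1; the decisive point is that $\Delta_{\{i,j\},S^c}$ occurs once in the correction for $\psi_i$ and once for $\psi_j$, hence contributes $2\pi_*(\sigma_i\sigma_j)$. Since $(\phi_k)_*\phi_k^*\psi_\sigma=\psi_\sigma$ by the projection formula (as $\phi_k$ is birational), this gives $(\phi_k)_*\psi=\psi_\sigma+2\Delta_s$.

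The routine part is the $\Delta$-formula, which is pure cycle bookkeeping once Step 1 is in place. The hard part will be Step 3: computing the discrepancy coefficients $a_E$ and the pullback multiplicities $\sigma_i^*E$ along each type of contracted bubble, and checking that collisions of three or more points truly contribute nothing. The entire content of the factor $2$ is that $\Delta_{\{i,j\}}$ is \emph{seen twice}, once through $\psi_i$ and once through $\psi_j$; keeping this bookkeeping straight, rather than any single estimate, is where care is required. As an alternative that avoids the relative-dualizing-sheaf computation, one could verify both identities numerically by intersecting against a spanning set of one-parameter families via $(\phi_k)_*D\cdot\gamma=D\cdot\phi_k^*\gamma$, which suffices because numerical and linear equivalence coincide on $\M_{0,\A}$; this trades conceptual transparency for a longer case analysis.
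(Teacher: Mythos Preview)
Your argument is correct, and it is genuinely different from the paper's. The paper factors $\phi_k$ through the isomorphism $\phi_2:\M_{0,n}\to\M_{0,\A_n^2}$, uses a codimension-two argument to see that $(\phi_2)_*\psi=\psi+a\Delta_s$ and $(\phi_2)_*\Delta=\Delta+b\Delta_s$ for some constants $a,b$, and then determines $a=2$, $b=1$ by intersecting both sides with a single explicit test curve (the family on $\P^1\times\P^1$ with $n-1$ constant sections and a diagonal). In other words, the paper does exactly the ``alternative'' you sketch in your last paragraph, but for just one test curve rather than a spanning set, because the codimension argument has already reduced the unknowns to two scalars.

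Your approach instead proves the refined pullback identity $\psi_i=\phi_k^*\psi_i'+\sum_{i\in S,\,2\le|S|\le k}\Delta_{S,S^c}$ via the relative dualizing sheaf, then pushes forward. This is more work up front but yields more: you get the individual $\psi_i$-comparison, and the source of the coefficient $2$ (each $\Delta_{\{i,j\}}$ appearing once in the $\psi_i$-correction and once in the $\psi_j$-correction) is made completely transparent. The paper's route is quicker and avoids any discrepancy computation, at the cost of the test-curve bookkeeping and of not explaining \emph{why} the coefficient is $2$. Both your Step~2 and the paper's treatment of $\Delta$ rest on the same trichotomy from your Step~1, so for the $\Delta$-formula the two arguments essentially coincide.
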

\begin{proof}
We will prove that $\phi_{2}:\M_{0,n} \rightarrow \M_{0,\A_{n}^{2}}$ satisfies
\begin{align*}
(\phi_2)_{*}\psi &= \psi+2\Delta_s,\\
(\phi_2)_{*}\Delta&= \Delta+\Delta_s,\\
\intertext{
and that $\phi_{2,k}:\M_{0,\A_{n}^{2}} \rightarrow \M_{0,\A_{n}^{k}}$ satisfies
}
(\phi_{2,k})_{*}\psi &= \psi,\\
(\phi_{2,k})_{*}\Delta&= \Delta.
\end{align*}
The latter formulae are immediate from the fact that the locus in $\M_{0,\A_{n}^{k}}$ over which the universal curve fails to be $\A_{n}^{2}$-stable has codimension $\geq 2$ (it is precisely the locus where three sections collide). The same reasoning shows that
\begin{align*}
(\phi_2)_{*}\psi &= \psi+a\Delta_s,\\
(\phi_2)_{*}\Delta&= \Delta+b\Delta_s,
\end{align*}
for some $a, b \in \Q$. Indeed, it is equivalent to show that $\phi_*\psi = \psi, \phi_*\Delta = \Delta$ as cycles in $\SM_{0,\A_n^{2}} \backslash \Delta_{s}$. But this is immediate since the locus of non-stable curves in $\SM_{0,\A_{n}^{2}} \backslash \Delta_{s}$ has codimension $\geq 2$. To establish that $a=2$ and $b=1$, we will use the following test curve.

Let $(\C \rightarrow B, \{\sigma_i\}_{i=1}^{n})$ be the complete 1-parameter family of $\A_{n}^2$-stable curves, obtained by taking $\C \rightarrow B$ to be the projection $\P^{1} \times \P^{1} \rightarrow \P^{1}$, taking  $\sigma_{1}, \ldots, \sigma_{n-1}$ to be $n-1$ distinct constant sections, and taking $\sigma_n$ to be the diagonal section. We have the following intersection numbers on $\M_{0,\A_n^2}$.
\begin{align*}
\psi.B&=2,\\
\Delta.B&=0,\\
\Delta_{s}.B&=n-1.
\end{align*}

Now let $(\C^{s} \rightarrow B^{s}, \{\sigma_i^{s}\}_{i=1}^{n})$ be the stable curve over the same base obtained by blowing-up the intersection points $\{\sigma_i \cap \sigma_n\}_{i=1}^{n-1}$ and taking $\sigma_i^{s}$ to be the strict transforms of $\sigma_i$. On $\M_{0,n}$, we have
\begin{align*}
\psi.B^{s}&=4+2n,\\
\Delta.B^{s}&=n-1,\\
\Delta_{s}.B^{s}&=0
\end{align*}

Since $\phi_{2}:\M_{0,n} \rightarrow \M_{0,\A_{n}^2}$ is an isomorphism, mapping $B^{s}$ isomorphically onto $B$, we must have $\psi.B^{s}=((\phi_2)_*\psi).B$ and $\Delta.B^{s}=((\phi_2)_*\Delta).B$. Thus,
\begin{align*}
4-2n&=(\psi + a\Delta_{s}).B=2+a(n-1),\\
n-1&=(\Delta+b\Delta_{s}).B=b(n-1),
\end{align*}
from which we conclude that $a=2$ and $b=1$ as desired.
\end{proof}

Now we can express the divisors $\phi_{*}(K_{\M_{0,n}}+\alpha\Delta)$ in terms of the fundamental classes defined above. In our subsequent analysis, it will be convenient to rescale the resulting $\Q$-divisor in order to ensure that the coefficient of $\Delta$ is $-1$.
\begin{lemma}\label{L:PushforwardClass} For any $\alpha \in \Q \cap [0,1]$, set $c=1/(2-\alpha)$. Let $\phi: \M_{0,n} \rightarrow \M_{0,\A_{n}^{k}}$ be the natural morphism. Then we have
\begin{itemize}
\item[(1)]$K_{\M_{0,n}}+\alpha\Delta=\psi-2\Delta+\alpha\Delta \sim c\psi-\Delta \in \N^{1}(\M_{0,n})$,
\item[(2)] $\phi_*(K_{\M_{0,n}}+\alpha\Delta)=\psi-2\Delta+\alpha\Delta \sim c\psi+(2c-1)\Delta_s-\Delta \in \N^{1}(\M_{0,\A_{n}^{k}})$,
\end{itemize}
where `$\sim$' denotes numerical proportionality.
\end{lemma}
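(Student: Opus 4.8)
The plan is to prove Lemma~\ref{L:PushforwardClass} by first establishing the intersection-theoretic identity on $\M_{0,n}$ in part~(1), then pushing forward via the reduction morphism $\phi = \phi_k$ using the formulae already proved in Lemma~\ref{L:PsiPush}. The two parts are really a single computation followed by an application of known push-forward data, so I do not expect any genuine obstacle; the work is entirely in carefully tracking coefficients and invoking the correct standard relations on $\M_{0,n}$.

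For part~(1), I would start from the well-known canonical class formula on the fine moduli space $\M_{0,n}$, namely $K_{\M_{0,n}} = \psi - 2\Delta$ (this is the genus-zero specialization of the Mumford/Harris--Mumford expression for the canonical class of the moduli space, recalling that $\psi$ denotes the total $\psi$-class and $\Delta$ the total boundary). Granting this, the first equality $K_{\M_{0,n}} + \alpha\Delta = \psi - 2\Delta + \alpha\Delta = \psi - (2-\alpha)\Delta$ is immediate. To obtain the numerical proportionality to $c\psi - \Delta$, I would simply divide by the positive scalar $2-\alpha$ (which is nonzero since $\alpha \in [0,1]$), giving $\tfrac{1}{2-\alpha}\psi - \Delta = c\psi - \Delta$ with $c = 1/(2-\alpha)$; since $\sim$ denotes numerical proportionality, this is exactly the claimed relation.

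For part~(2), I would apply $\phi_*$ to the expression $K_{\M_{0,n}} + \alpha\Delta = \psi - 2\Delta + \alpha\Delta$ from part~(1), using linearity of push-forward together with the formulae $(\phi_k)_*\psi = \psi + 2\Delta_s$ and $(\phi_k)_*\Delta = \Delta + \Delta_s$ from Lemma~\ref{L:PsiPush}. Substituting and collecting terms yields
\[
\phi_*(K_{\M_{0,n}}+\alpha\Delta) = (\psi + 2\Delta_s) - (2-\alpha)(\Delta + \Delta_s) = \psi - (2-\alpha)\Delta + \alpha\,\Delta_s.
\]
Dividing by the positive scalar $2-\alpha$ and using $c = 1/(2-\alpha)$ then gives $c\psi - \Delta + \tfrac{\alpha}{2-\alpha}\Delta_s$, and the final step is the purely algebraic verification that $\tfrac{\alpha}{2-\alpha} = 2c - 1$, which follows from $2c - 1 = \tfrac{2}{2-\alpha} - 1 = \tfrac{\alpha}{2-\alpha}$. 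This establishes the proportionality $\phi_*(K_{\M_{0,n}}+\alpha\Delta) \sim c\psi + (2c-1)\Delta_s - \Delta$ as claimed.

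The only point requiring minor care is bookkeeping: one must remember that $\psi$ and $\Delta$ denote \emph{total} classes (so no combinatorial factors intrude) and that the symbol $\Delta$ on the source $\M_{0,n}$ and the symbols $\psi, \Delta, \Delta_s$ on the target $\M_{0,\A_n^k}$ live in different Néron--Severi groups, with Lemma~\ref{L:PsiPush} providing the dictionary. Since both reduction of the canonical class and the push-forward formulae are already in hand, there is no substantive obstacle—the lemma is essentially a formal consequence of the preceding material, and I would present it as such.
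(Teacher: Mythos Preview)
Your proposal is correct and follows essentially the same route as the paper: both begin from $K_{\M_{0,n}}=\psi-2\Delta$ (the paper derives it from Grothendieck--Riemann--Roch with $\lambda=0$, you cite it as well-known), then apply the push-forward formulae of Lemma~\ref{L:PsiPush} and rescale by $1/(2-\alpha)$. Your write-up is in fact more explicit than the paper's, which simply declares the result an ``immediate consequence'' of Lemma~\ref{L:PsiPush}.
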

\begin{proof}
Using the Grothendieck-Riemann-Roch formula as in \cite{Harris}, the canonical class of the stack $\SM_{g,n}$ is $13\lambda-2\Delta+\psi$. Since $\SM_{0,n}=\M_{0,n}$ and $\lambda=0 \in \text{Pic}(\M_{0,n})$, we have $K_{\M_{0,n}}=\psi-2\Delta$. The lemma is now an immediate consequence of the push-forward formulae
\begin{align*}
\phi_{*}\psi &\equiv \psi+2\Delta_s,\\
\phi_{*}\Delta&\equiv \Delta+\Delta_s,
\end{align*}
which are proved in Lemma \ref{L:PsiPush}.
\end{proof}
Since divisors of the form $c\psi+(2c-1)\Delta_s-\Delta$ play a distinguished role in our subsequent analysis, it will be useful to make the following definition.
\begin{definition}[]\label{L:FundamentalDivisor}
For any weight vector $\A:=\A_{n,m}^{k}$ and rational number $c \in \Q$, we define the divisor class $D_{k}(c)$ on $\M_{0,\A}$ by the formula
$$
D_{k}(c):=c\psi_\sigma+(2c-1)\Delta_s+\psi_\tau-\Delta.
$$
\end{definition}

Next, we study how the tautological divisor classes pull-back under the reduction morphisms. 
\begin{lemma}[Pull-back formulae for $\phi$]\label{L:Pullback}
Consider the reduction morphism $$\phi: \M_{0,\A_{n,m}^{k-1}}\ra \M_{0,\A_{n,m}^{k}},$$ and let $F$ denote the union of the exceptional divisors of $\phi$. Then we have
\begin{enumerate}
\item $\phi^{*} \psi_\sigma=\psi_\sigma-k F$,
\item $\phi^{*} \psi_\tau = \psi_\tau$,
\item $\phi^{*} \Delta_s=\Delta_s+\binom{k}{2}F$,
\item $\phi^{*} \Delta=\Delta-F$.
\end{enumerate}
\end{lemma}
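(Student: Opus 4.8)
The reduction morphism $\phi: \M_{0,\A_{n,m}^{k-1}} \ra \M_{0,\A_{n,m}^{k}}$ differs from the isomorphism regime only along the exceptional locus $F$, so the strategy is to compute each pull-back away from $F$ and then fix the coefficient of $F$ using a single well-chosen test curve. First I would observe that $\phi$ is an isomorphism over the open locus where the $1/k$-marked points collide in groups of size at most $k-1$, and that $F$ is precisely the locus parametrizing curves with a rational tail carrying exactly $k$ of the small sections (the tail that gets contracted when the weight drops to $1/k$, forcing those $k$ points to coincide). Since $\phi$ is an isomorphism in codimension zero off $F$, each pull-back $\phi^*D$ agrees with $D$ (pulled back naively) plus an unknown integer multiple of $F$; this gives the \emph{shape} of all four formulae for free and reduces everything to determining four coefficients.

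**Determining the coefficients.** For $\psi_\tau$ the coefficient is $0$: the sections $\tau_j$ have weight $1$, never collide, and are disjoint from the contracted tail, so $\phi^*\psi_\tau = \psi_\tau$ with no $F$-correction. For the remaining three I would exhibit a test curve $B \subset \M_{0,\A_{n,m}^{k-1}}$ that meets $F$ transversally and whose intersection numbers with $\psi_\sigma$, $\Delta_s$, and $\Delta$ I can read off directly. The natural choice is a one-parameter family in which $k$ of the small sections come together: as the parameter varies, a $\P^1$-bundle picture (as in the proof of Lemma \ref{L:PsiPush}) lets me compute $\psi_\sigma.B$, $\Delta_s.B$, and $\Delta.B$ upstairs, while downstairs on $\M_{0,\A_{n,m}^{k}}$ the image curve $\phi(B)$ lies in the collision locus and I compute the corresponding numbers there. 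Matching $\phi^*D.B = D.\phi(B)$ together with $F.B$ then solves for each coefficient. The combinatorial factors $k$, $\binom{k}{2}$, and $-1$ should emerge as follows: contracting the tail with $k$ small points means $\psi_\sigma$ drops by $k$ along $F$ (one unit per absorbed section, matching $\phi^*\psi_\sigma = \psi_\sigma - kF$); the $k$ sections on the tail collide pairwise in $\binom{k}{2}$ ways, producing the coefficient of $\Delta_s$; and the node where the tail was attached accounts for the $-F$ in $\phi^*\Delta$.

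**The main obstacle.** The genuinely delicate point is the bookkeeping for $\psi_\sigma$ and $\Delta_s$ on the exceptional divisor, because $F$ itself is reducible — it is a union of components $F_T$ indexed by the $k$-element subsets $T$ of the small markings that get contracted — and I must check that the \emph{same} coefficient ($-k$ for $\psi_\sigma$, $+\binom{k}{2}$ for $\Delta_s$) appears on every component, so that the formulae hold with $F = \sum_T F_T$ as a single class. This symmetry is plausible since all $k$-element subsets play interchangeable roles, but verifying it requires understanding the local geometry of each contraction carefully; a test curve meeting only one component $F_T$ pins down the coefficient for that component, and an invariance-under-relabeling argument then extends it to all of $F$. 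I would also need to confirm that the test curve genuinely meets $F$ with multiplicity one (so that $F.B = 1$ and no spurious factors appear), which is where the explicit $\P^1$-bundle construction earns its keep. Once the coefficient on a single component is computed and the symmetry is invoked, assembling the four identities is routine.
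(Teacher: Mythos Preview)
Your outline is correct and shares the paper's overall framework---write $\phi^*D = D + a_D F$ and determine each $a_D$ by a test curve---but your choice of test curve differs from the paper's in a way worth noting. You propose a curve $B$ meeting $F$ transversally (so $F.B = 1$) and then compare intersection numbers upstairs with those of $\phi(B)$ downstairs. The paper instead takes a curve $B$ that is \emph{contracted} by $\phi$: it builds a family whose moving component is a rational tail carrying exactly $k$ of the $\sigma$-sections, with the remaining sections on a trivial bundle glued along the node. This $B$ lies entirely in one component of $F$ (so $F.B = -1$, the self-intersection of a fiber of the exceptional divisor), and the projection formula gives $\phi^*D.B = 0$ for free. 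The payoff is that one never has to compute anything on the target space: each coefficient $a_D$ is read off from the single linear equation $0 = D.B + a_D(F.B)$, with all quantities computed upstairs on the explicit family. Your transversal-curve approach would work too, but requires tracking intersection numbers on both $\M_{0,\A^{k-1}_{n,m}}$ and $\M_{0,\A^{k}_{n,m}}$, which is more bookkeeping.

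Your ``main obstacle''---that $F = \sum_T F_T$ is reducible and one must check the coefficient is the same on every component---is a real point, and it applies equally to the paper's approach: the paper also writes $\phi^*\psi_\sigma = \psi_\sigma + aF$ with a single constant $a$ and checks it on one contracted curve. In both cases the justification is the $S_n$-symmetry you identify. Your heuristic for $\psi_\tau$ (the $\tau$-sections never touch the contracted tail, so no correction) is morally right but is most cleanly made rigorous by the same test-curve computation, which gives $\psi_\tau.B = 0$ directly.
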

\begin{proof} 
Clearly, 
$$\phi^{*} \psi_\sigma =\psi_\sigma+aF,$$ 
for some constant $a$. To determine this coefficient, we use a test curve contracted by $\phi$. Let $\C_1$ be the blow-up of $\P^2$ at a point with exceptional curve $E$, and regard $E$ as a section of the natural $\P^{1}$-bundle fibration $\C_{1} \rightarrow B=\P^{1}.$ Let $k$ sections of $\C_1\ra B$ be given by the strict transforms 
of $k$ lines on $\C_1$; note that each section has self-intersection $1$.  Let $\C_2=\P^1\times \P^1$ 
and take $n+m-k$ constant sections of the second projection $\C_2\ra B=\P^{1}$. Let $\C$ be the union of $\C_1$ and $\C_2$, obtained
by identifying $E \subset \C_1$ with a constant section of $\C_2\ra B$. 
Then we can regard $\C\ra B$ as the family of $\A_{n,m}^{k-1}$\nb-stable curves 
by giving all $k$ sections of $\C_1$ and $n-k$ sections of $\C_2$ weight $1/(k-1)$, 
and giving the remaining $m$ sections of $\C_2$
weight $1$. 

Clearly, $B\subset \M_{0,\A^{k-1}_{n,m}}$ is contracted by $\phi$. 
We calculate 
\begin{align*}
\psi_\sigma .B &=-k, \\
\psi_\tau.B&=0, \\
 \Delta_s .B&= \binom{k}{2}, \\
\Delta.B &=-1,\\
F.B &=-1.
\end{align*}
By the projection formula
$$0 =B\cdot \phi^{*} \psi_\sigma=\psi_\sigma. B+a(F. B)=-k-a.$$
Therefore, $a=-k$. The remaining formulae are proved in the same fashion.
\end{proof}

Finally, we will also need to utilize the natural morphism
$$
\chi:\M_{0,\A_{n-k,m+1}^{k}} \rightarrow \M_{0,\A_{n,m}^{k}},
$$
obtained by replacing the $(m+1)^{st}$ section of weight one with $k$ coincident sections of weight $1/k$. Let $(\pi:\C\ra \M_{0,\A_{n-k,m+1}^{k}}, \{\sigma_i\}_{i=1}^{n-k}, \{\tau_j\}_{j=1}^{m+1})$ be the universal 
curve, and consider the divisor class $\psi_{\tau_{m+1}}:=-\pi_*(\tau_{m+1}^2)$ on $\M_{0,\A_{n-k,m+1}^{k}}$. 
\begin{lemma}[Pull-back formulae for $\chi$]\label{L:PullbackX}
Under the replacement morphism $\chi:\M_{0,\A_{n-k,m+1}^{k}} \rightarrow \M_{0,\A_{n,m}^{k}}$, the tautological divisors pull-back as
\begin{enumerate}
\item $\chi^{*} \psi_\sigma=\psi_\sigma+k\psi_{\tau_{m+1}}$,
\item $\chi^{*} \psi_\tau = \psi_\tau-\psi_{\tau_{m+1}}$,
\item $\chi^{*} \Delta_s=\Delta_s-\binom{k}{2}\psi_{\tau_{m+1}}$,
\item $\chi^{*} \Delta=\Delta$.
\end{enumerate}
\end{lemma}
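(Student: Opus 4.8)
The plan is to mimic exactly the strategy used for the reduction morphism $\phi$ in Lemma \ref{L:Pullback}: each tautological class differs from its pull-back by a multiple of a single distinguished divisor class, and I will pin down the coefficients using a well-chosen one-parameter test family contracted by $\chi$. Here the morphism $\chi$ replaces the weight-one section $\tau_{m+1}$ on $\M_{0,\A_{n,m}^{k}}$ by $k$ coincident weight-$1/k$ sections; the relevant exceptional class is $\psi_{\tau_{m+1}}=-\pi_*(\tau_{m+1}^2)$, so I expect formulae of the shape $\chi^{*}\psi_\sigma=\psi_\sigma+a\,\psi_{\tau_{m+1}}$, and similarly for $\psi_\tau$, $\Delta_s$, and $\Delta$, for constants $a$ to be determined. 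The justification that only $\psi_{\tau_{m+1}}$ enters (rather than some more complicated correction) is the codimension argument already invoked for $\phi$: away from the locus where the $k$ replacing sections collide into $\tau_{m+1}$, the morphism $\chi$ is an isomorphism, and this locus has codimension $\geq 1$ with associated class proportional to $\psi_{\tau_{m+1}}$.

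To compute the coefficients I would build a test curve $B$ over a base $\P^1$, analogous to the one in Lemma \ref{L:Pullback}, on which $\chi$ acts as a contraction in the sense that the $k$ sections realizing the weight-$1/k$ points degenerate to a single weight-one section. Concretely, I would take a $\P^1$-bundle over $B$ (for instance a blow-up of $\P^2$ with exceptional curve $E$ playing the role of the $\tau_{m+1}$-section), attach a $\P^1\times\P^1$ component carrying the remaining constant sections, and arrange the $k$ sections of weight $1/k$ so that they are the strict transforms of $k$ lines, each of self-intersection $1$, meeting $E$. The key point is to choose the geometry so that on $\M_{0,\A_{n-k,m+1}^{k}}$ the intersection numbers $\psi_\sigma.B$, $\psi_\tau.B$, $\Delta_s.B$, $\Delta.B$, and crucially $\psi_{\tau_{m+1}}.B$ are all computable by elementary self-intersection calculations on the chosen surfaces. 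Then the projection formula $B\cdot\chi^{*}\psi_\sigma=(\chi_*B)\cdot\psi_\sigma$, applied to each tautological class in turn, yields one linear equation per coefficient, and solving these gives $a=k$ for $\psi_\sigma$, $a=-1$ for $\psi_\tau$, $a=-\binom{k}{2}$ for $\Delta_s$, and $a=0$ for $\Delta$.

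I expect the main obstacle to be the bookkeeping in setting up the test family so that it genuinely lives in $\M_{0,\A_{n-k,m+1}^{k}}$, is contracted by $\chi$, and has a nonzero, cleanly computable value of $\psi_{\tau_{m+1}}.B$ — since $\psi_{\tau_{m+1}}$ is the new divisor class governing all the corrections, the whole computation is only as reliable as the geometry encoding it. One must verify that the chosen weights make the family $\A_{n-k,m+1}^{k}$-stable throughout, that the $k$ replacing sections have the stated self-intersections and collide appropriately to produce the right $\Delta_s.B$ and $\psi_{\tau_{m+1}}.B$, and that the image family is $\A_{n,m}^{k}$-stable with $\tau_{m+1}$ now a single honest section. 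The formula $\chi^{*}\Delta=\Delta$ should emerge from the fact that the replacement operation does not alter the nodal structure of the generic fiber, so no exceptional correction to $\Delta$ appears; this case serves as a useful consistency check on the test curve, since it forces $\Delta.B$ and $\psi_{\tau_{m+1}}.B$ to be compatible with a vanishing coefficient. The remaining three coefficients then follow by the identical projection-formula computation, and I would verify them against the internal consistency of Lemmas \ref{L:PsiPush} and \ref{L:Pullback} (for instance, the signs and the appearance of $k$ and $\binom{k}{2}$ should parallel the $\phi$ formulae, reflecting that $\chi$ and $\phi$ both concern the same $k$-fold collision geometry).
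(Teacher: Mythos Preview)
Your proposal rests on a misconception about the nature of $\chi$. Unlike the reduction morphism $\phi$ in Lemma~\ref{L:Pullback}, the map $\chi:\M_{0,\A_{n-k,m+1}^{k}} \rightarrow \M_{0,\A_{n,m}^{k}}$ is not a birational contraction; it is a \emph{closed immersion}. Its image is the proper subvariety of $\M_{0,\A_{n,m}^{k}}$ where $k$ designated weight-$1/k$ sections coincide. Consequently, there is no exceptional locus, no curve is contracted by $\chi$, and the codimension argument you invoke (``away from the locus where the $k$ replacing sections collide into $\tau_{m+1}$, the morphism $\chi$ is an isomorphism'') is simply false: those $k$ sections coincide identically on the entire image, by the very definition of $\chi$. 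So the projection-formula trick $0=\chi_*B\cdot\psi_\sigma$ that drove Lemma~\ref{L:Pullback} is unavailable here, and your proposed test family does not do what you want.

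The paper takes a much more direct route. Because $\chi$ is induced by an explicit family of $\A_{n,m}^{k}$-stable curves---namely the universal curve over $\M_{0,\A_{n-k,m+1}^{k}}$ with the section set $\{\sigma'_i\}$ obtained by declaring $\sigma'_i=\sigma_i$ for $1\le i\le n-k$ and $\sigma'_i=\tau_{m+1}$ for $n-k+1\le i\le n$---the pull-backs $\chi^*\psi_\sigma$, $\chi^*\psi_\tau$, $\chi^*\Delta_s$, $\chi^*\Delta$ are literally the tautological classes of this family, computable by expanding the defining sums. For instance,
\[
\chi^*\psi_\sigma=-\sum_{i=1}^{n}\pi_*(\sigma'_i)^2=-\sum_{i=1}^{n-k}\pi_*(\sigma_i^2)-k\,\pi_*(\tau_{m+1}^2)=\psi_\sigma+k\,\psi_{\tau_{m+1}},
\]
and the other three follow in the same one-line fashion (using that $\tau_{m+1}$ is disjoint from the $\sigma_i$). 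No test curve is needed; the formulae drop out of the definitions once you recognize that $\chi$ is the classifying map of a family you already have in hand.
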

\begin{proof}
 Observe that $\pi:\C\ra \M_{0,\A_{n-k,m+1}^{k}}$ together with sections 
 $\{\sigma'_i\}_{i=1}^{n}$ and $\{\tau_j\}_{j=1}^{m}$, where $\sigma_i'=\sigma_i$ for $1\leq i\leq n-k$ and $\sigma'_i=\tau_{m+1}$ for $n-k+1\leq i\leq n$, is a family of $\A_{n,m}^{k}$\nb-stable curves.
 The morphism $\chi$ is induced by this family and is a closed embedding. 
 The lemma now follows from the projection formula and from the following list of equalities
 \begin{align*}
 \chi^{*} \psi_\sigma &=-\sum_{i=1}^{n} \pi_*(\sigma'_i)^2 =-\sum_{i=1}^{n-k} \pi_* (\sigma_i^2) - k \pi_* (\tau_{m+1}^2) = \psi_\sigma+k\psi_{\tau_{m+1}},\\
 \chi^{*} \psi_\tau &=-\sum_{j=1}^{m} \pi_*(\tau_j^2) =-\sum_{j=1}^{m+1} \pi_*(\tau_j^2)+\pi_*(\tau_{m+1}^2)= \psi_\tau-\psi_{\tau_{m+1}}, \\
 \chi^{*} \Delta_s &=\sum_{1\leq i<j\leq n}\pi_*(\sigma'_i\cdot \sigma'_j)=\sum_{1\leq i<j\leq n-k}\pi_*(\sigma_i\cdot \sigma_j)+\binom{k}{2}\pi_*(\tau_{m+1}^2) =\Delta_s-\binom{k}{2}\psi_{\tau_{m+1}}, \\
 \chi^{*} \Delta&=\pi_*(\Sing(\pi))=\Delta.
 \end{align*}
In the third equality, we use the fact that $\tau_{m+1}$ does not intersect any of the sections $\{\sigma_i\}_{i=1}^{n-k}$.

\end{proof}

\section{Positivity on 1-parameter families}\label{S:Positivity}

Throughout this section, we suppose that $(\pi: \C \rightarrow B, \sigman, \taum)$ is a {\em generically smooth} family of $\A:=\A_{n,m}^{k}$-stable pointed curves over a smooth curve $B$.
 In particular, up to $k$ of the sections $\sigman$ can collide, while the sections $\taum$ are each disjoint from each other and the $\sigman$. Note that we  allow the case $m=0$, i.e. $\A:=\A_{n}^{k}$. Under these conditions, we will explain how to derive inequalities among the intersection numbers:
\begin{align*}
\psi_{\sigma}.B&:=-\sum_{i=1}^{n}\sigma_i^{2} \, ,\\
\psi_{\tau}.B&:=-\sum_{i=1}^{m}\tau_i^{2}\, ,\\
\Delta_s.B&:=\sum_{1 \leq j < k \leq n}\sigma_i.\sigma_j\, ,\\
\Delta.B&:=\deg(\pi_*(\Sing(\pi)))\, .
\end{align*}

To start, we let $\C_{0}$ be a minimal resolution of singularities of $\C$. By successively blowing-down (-1)-curves contained in the fibers, we obtain a sequence of birational morphisms over $B$
$$\C_{0} \rightarrow \C_{1} \rightarrow \ldots \rightarrow \C_{N},$$
such that $\C_{N}$ is a smooth $\P^{1}$-bundle over $B$. If we let $\{\sigma^{i}_j\}_{j=1}^{n}$ and $\{\tau^{i}_j\}_{j=1}^{m}$ denote the sections of $\pi_i: \C_{i} \rightarrow B$ obtained as the images of $\sigman$ and $\taum$ on $\C_{i}$, then each family $(\pi_i: \C_{i} \rightarrow B, \{\sigma^{i}_j\}_{j=1}^{n}, \{\tau^{i}_j\}_{j=1}^{m})$ satisfies the following conditions:
\begin{itemize} 
\item[1.] The geometric fibers of $\pi$ are reduced connected nodal curves of arithmetic genus zero.
\item[2.] The generic fiber of $\pi$ is smooth.
\item[3.] The sections $\sigman$ and $\taum$ lie in the smooth locus of $\pi$.
\item[4.] If the exceptional divisor of $\C_{i} \rightarrow \C_{i+1}$ meets $r_1$ sections of weight $1/k$ and $r_2$ sections of weight 1, then $r_1/k+r_2 >1.$
\end{itemize}

With notation as above, we define functions $F_{\Delta}, F_{\sigma}, F_{\tau}, F_{\sigma,\tau}: [0,N] \rightarrow \Q$ by the formulae
\begin{align*}
F_{\Delta}(i)&:=\deg (\Sing(\pi_i))\, ,\\
F_{\sigma}(i)&:=-\frac{1}{n-1}\sum_{1 \leq j < k \leq n}(\sigma^i_j-\sigma^i_k)^2\, ,\\
F_{\tau}(i)&:=-\frac{1}{m-1}\sum_{1 \leq j < k \leq m}(\tau^i_j-\tau^i_k)^2\, ,\\
F_{\sigma, \tau}(i)&:=-\frac{1}{nm}\sum_{j=1}^{n}\sum_{k=1}^{m}(\sigma^i_j-\tau^i_k)^2 \, .\\
\end{align*}

The following lemma is the key ingredient in all our subsequent arguments.
\begin{lemma}[Key Lemma]\label{L:Main}
\begin{itemize}
\item[]
\item[(a)] The numerical functions $F_{\Delta}(i), F_{\sigma}(i), F_{\tau}(i), F_{\sigma, \tau}(i)$ satisfy
\begin{align*}
F_{\Delta}(0)&=\Delta.B,&F_{\Delta}(N)=0,\\
F_{\sigma}(0)&=\psi_{\sigma}.B+\frac{2}{n-1}\Delta_s.B,&F_{\sigma}(N)=0,\\
F_{\tau}(0)&=\psi_{\tau}.B,&F_{\tau}(N)=0,\\
F_{\sigma, \tau}(0)&=\frac{1}{n}\psi_{\sigma}.B+\frac{1}{m}\psi_{m}.B,&F_{\sigma, \tau}(N)=0.\\
\end{align*}
\item[(b)] Suppose that the exceptional divisor of the birational map $\C_{i} \rightarrow \C_{i+1}$ meets $r_1$ of the sections $\sigman$ and $r_2$ of the sections $\taum$. Then we have
\begin{align*}
F_{\Delta}(i)-F_{\Delta}(i+1)&=1,\\
F_{\sigma}(i)-F_{\sigma}(i+1)&= \frac{r_1(n-r_1)}{n-1}, \\
F_{\tau}(i)-F_{\tau}(i+1)&=\frac{r_2(m-r_2)}{m-1},\\
F_{\sigma, \tau}(i)-F_{\sigma, \tau}(i+1)&=\frac{r_1(m-r_2)+r_2(n-r_1)}{nm}. 
\end{align*}
\end{itemize}

\end{lemma}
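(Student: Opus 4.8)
The plan is to treat both parts as a pair of elementary intersection-number computations on the smooth surfaces $\C_i$, since every quantity in sight is a self-intersection of a section, a product of two sections, or a node count, and all four functions are built from these.

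For part (a) I would first expand each sum of squares algebraically. Writing $(\sigma_j^i-\sigma_l^i)^2=(\sigma_j^i)^2-2\,\sigma_j^i\cdot\sigma_l^i+(\sigma_l^i)^2$ and noting that each self-intersection occurs in exactly $n-1$ pairs gives
$$
F_{\sigma}(i)=-\sum_{j}(\sigma_j^i)^2+\frac{2}{n-1}\sum_{j<l}\sigma_j^i\cdot\sigma_l^i,
$$
and the analogous identities for $F_{\tau}$ and $F_{\sigma,\tau}$. In the latter two the cross terms collapse: $\tau$-sections never collide, and $\sigma$- and $\tau$-sections are disjoint, so $\sum_{j,k}\sigma_j^i\cdot\tau_k^i=0$. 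To evaluate at $i=0$ I would use that $\C_0\to\C$ is an isomorphism over the smooth locus of $\C$, where all collisions occur; hence $\sum_{j<l}\sigma_j^0\cdot\sigma_l^0=\Delta_s.B$ and $-\sum_j(\sigma_j^0)^2=\psi_\sigma.B$, which yields the four left-hand values (with $\psi_m$ read as $\psi_\tau$). For the terminal values I would invoke that on the $\P^1$-bundle $\C_N$ the difference of any two sections is numerically a multiple of the fiber class $f$; since $f^2=0$, every squared difference vanishes, giving $F_{\sigma}(N)=F_{\tau}(N)=F_{\sigma,\tau}(N)=0$, while $F_{\Delta}(N)=0$ because a $\P^1$-bundle has smooth fibers.

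For part (b), let $\beta:\C_i\to\C_{i+1}$ contract the $(-1)$-curve $E$, so that $\C_i$ is the blow-up of $\C_{i+1}$ at a point $p$. The strict transform of a section is $\beta^{*}\sigma-E$ if the section passes through $p$ and $\beta^{*}\sigma$ otherwise. Using $\beta^{*}D\cdot E=0$, $E^2=-1$, and $(\beta^{*}D)^2=D^2$, I obtain for any two of the sections, say $a$ and $b$, that $(a^i-b^i)^2=(a^{i+1}-b^{i+1})^2-(\epsilon_a-\epsilon_b)^2$, where $\epsilon_a,\epsilon_b\in\{0,1\}$ record whether each passes through $p$. Thus $F(i)-F(i+1)$ equals the normalizing constant times the number of pairs in which exactly one member meets $E$; counting these mixed pairs gives $r_1(n-r_1)$ for $F_\sigma$, $r_2(m-r_2)$ for $F_\tau$, and $r_1(m-r_2)+r_2(n-r_1)$ for $F_{\sigma,\tau}$, which are the asserted values after dividing by $n-1$, $m-1$, and $nm$.

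The main obstacle — and the only place where geometry, rather than bookkeeping, enters — is the two $\Delta$-statements. For $F_{\Delta}(i)-F_{\Delta}(i+1)=1$ I must argue that the contracted $E$ is a \emph{leaf} of the genus-zero fiber containing it, meeting the rest of that fiber in a single node: because the fibers are reduced nodal trees and arithmetic genus is constant in the flat family $\C_i\to B$, contracting a component meeting the rest in two or more points would either raise the arithmetic genus or destroy nodality. Granting this, contracting $E$ removes exactly one node. For $F_{\Delta}(0)=\Delta.B$ I would use that $\Delta.B=\deg\pi_*(\Sing(\pi))$ assigns multiplicity $\ell$ to a node with local total-space equation $xy=t^{\ell}$ (an $A_{\ell-1}$ singularity), and that the minimal resolution replaces such a point by a chain of $\ell-1$ rational curves, producing exactly $\ell$ nodes in the corresponding fiber of $\C_0$; summing over all nodes shows the total node count of $\C_0\to B$ equals $\Delta.B$.
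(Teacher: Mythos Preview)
Your proposal is correct and follows essentially the same approach as the paper: expand the squared differences, use that the sections lie in the smooth locus of the total space so their intersection numbers survive the minimal desingularization, invoke the fiber-class argument on $\C_N$, and for part~(b) apply the blow-up formula $\sigma^i=\beta^*\sigma^{i+1}-\epsilon E$ and count mixed pairs.

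One small correction: your argument that $E$ must be a leaf (``contracting a component meeting the rest in two or more points would either raise the arithmetic genus or destroy nodality'') is not quite right as stated---contracting a bridge component in a tree of $\P^1$'s actually preserves both nodality and arithmetic genus. The clean reason is purely numerical: since the fiber $F=\sum C_j$ is reduced and $F^2=0$, one has $C_j\cdot F=0$, hence $C_j^2=-\#\{\text{nodes of }F\text{ on }C_j\}$; so $E^2=-1$ forces $E$ to meet the rest of the fiber in exactly one node. The paper itself simply asserts this step (``it is clear that $\C_{i+1}$ has one less node''), so your instinct to justify it is good---just swap in the self-intersection argument.
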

\begin{proof}
We first prove part (a). To see that
$$
F_{\sigma}(0)=F_{\tau}(0)=F_{\sigma,\tau}(0)=0,
$$
simply observe that on a $\P^{1}$-bundle, the difference of two sections is numerically equivalent to a multiple of the fiber class, and the fiber class has self-intersection zero. Also, $F_{\Delta}(0)=0$ is clear, since a $\P^{1}$-bundle has no singular fibers.

To see that $F_{\Delta}(0)=\Delta.B$, it is sufficient to observe that the minimal desingularization $\C^{0} \rightarrow \C$ has the effect of replacing nodes where the total space has an $A_{k}$-singularity (thus contributing $k$ to $\deg(\Sing(\pi))$) by $k$ nodes with smooth total space, each of which contributes one to $\deg(\Sing(\pi_0))$.

To see that $F_{\sigma}(0)=\psi_{\sigma}+\frac{2}{n-1}\Delta_s$, note that since the sections $\sigman, \taum$ are disjoint from the singular locus of $\pi$, their self-intersections are not changed by taking a minimal desingularization. Thus,
\begin{align*}
F_{\sigma}(0)&=-\frac{1}{n-1}\sum_{1 \leq i<j \leq n}(\sigma^0_j-\sigma^0_k)^2=-\frac{1}{n-1}\sum_{1 \leq i<j \leq n}(\sigma_j-\sigma_k)^2\\
&=-\sum_{i=1}^{n}\sigma_i^2 +\frac{2}{n-1}\sum_{1 \leq i<j \leq n} \sigma_i.\sigma_j=(\psi_{\sigma}.B)+\frac{2}{n-1}(\Delta_s.B).
\end{align*}
The computations for $F_{\sigma}(0)$ and $F_{\sigma, \tau}(0)$ are similar, bearing in mind the fact that all intersections $\tau_i.\sigma_j$ and $\tau_i.\tau_j$ ($i \neq j$) are zero. We have
\begin{align*}
F_{\tau}(0)&=-\frac{1}{n-1}\sum_{1 \leq i<j \leq n}(\tau^0_j-\tau^0_k)^2=-\frac{1}{n-1}\sum_{1 \leq i<j \leq n}(\tau_j-\tau_k)^2\\
&=-\sum_{i=1}^{n}\tau_i^2 =(\psi_{\tau}.B)\\
F_{\sigma, \tau}(0)&=-\frac{1}{nm}\sum_{j=1}^{n}\sum_{k=1}^{m}(\sigma^0_j-\tau^0_k)^2=-\frac{1}{nm}\sum_{j=1}^{n}\sum_{k=1}^{m}(\sigma_j-\tau_k)^2\\
&=-\frac{1}{nm}\sum_{i=1}^{n}m\sigma_i^2 -\frac{1}{nm}\sum_{i=1}^{m}n\tau_i^2=\frac{1}{n}(\psi_{\sigma}.B)+\frac{1}{m}(\psi_{\tau}.B)\\
\end{align*}

It remains to prove part (b) of the lemma. Let $\phi:\C_{i} \rightarrow \C_{i+1}$ denote the $i^{th}$ blow-down morphism. Since $\phi$ contracts a single (-1)-curve, it is clear that $\C_{i+1}$ has one less node in the union of its singular fibers than $\C_{i}$, i.e. $F_{\Delta}(i)-F_{\Delta}(i+1)=1$.

We may assume without loss of generality that the exceptional divisor $E$ of $\phi$
meets $\{\sigma_j\}_{j=1}^{r_1}$ and $\{\tau_j\}_{j=1}^{r_2}$, and is disjoint from $\{\sigma_j\}_{j=r_1+1}^{n}$ and $\{\tau_j\}_{j=r_2+1}^{m}$. Then we have
\begin{align*}
\sigma_{j}^{i}-\sigma_{k}^{i}&=\phi^*(\sigma_{j}^{i+1}-\sigma_{k}^{i+1}), &&\text{ if }
j,k \leq r_1,\text{ or } j,k \geq r_1+1,\\
\sigma_{j}^{i}-\sigma_{k}^{i}&=\phi^*(\sigma_{j}^{i+1}-\sigma_{k}^{i+1})+E, &&\text{ otherwise.}
\end{align*}
It follows that
\begin{align*}
(\sigma_{j}^{i}-\sigma_{k}^{i})^2&=(\sigma_{j}^{i+1}-\sigma_{k}^{i+1})^2, &&\text{ if }
j,k \leq r_1,\text{ or } j,k \geq r_1+1,\\
(\sigma_{j}^{i}-\sigma_{k}^{i})^2&=(\sigma_{j}^{i+1}-\sigma_{k}^{i+1})^2-1, &&\text{ otherwise.}
\end{align*}
Thus,
\begin{align*}
F_{\sigma}(i)-F_{\sigma}(i+1)&=\frac{1}{n-1}\sum_{1 \leq j<k \leq n}\left((\sigma^{i+1}_j-\sigma^{i+1}_k)^2-(\sigma^i_j-\sigma^i_k)^2\right)=\frac{r_1(n-r_1)}{n-1}.
\end{align*}
The remaining statements in part (b) are proved in the same fashion.
\end{proof}

How can we use this lemma to prove positivity of certain linear combinations $\alpha \psi_{\sigma}.B + \beta \psi_{\tau}.B + \gamma \Delta_s.B + \eta \Delta.B$? Well, part (a) implies that we can find coefficients $a_{\sigma}, a_{\tau}, a_{\sigma, \tau}, a_{\delta} \in \Q$, such that
\[
a_{\sigma}F_{\sigma}(0)+a_{\tau}F_{\tau}(0)+a_{\sigma, \tau}F_{\sigma, \tau}(0)+a_{\delta}F_{\Delta}(0)=\alpha (\psi_{\sigma}.B) + \beta (\psi_{\tau}.B) + \gamma (\Delta_s.B) + \eta (\Delta.B)
\]
On the other hand, we have
\[
a_{\sigma}F_{\sigma}(N)+a_{\tau}F_{\tau}(N)+a_{\sigma, \tau}F_{\sigma, \tau}(N)+a_{\delta}F_{\Delta}(N)=0.
\]
Thus, if we can show that $a_{\sigma}F_{\sigma}(i)+a_{\tau}F_{\tau}(i)+a_{\sigma, \tau}F_{\sigma, \tau}(i)+a_{\delta}F_{\Delta}(i)$ is a decreasing function of $i$, we may conclude that
$$\alpha (\psi_{\sigma}.B) + \beta (\psi_{\tau}.B) + \gamma (\Delta_s.B) + \eta (\Delta.B) \geq 0,$$
with equality holding only if $N=0$.

Thus, the key point is to determine what conditions on the coefficients $a_{\sigma}, a_{\tau}, a_{\sigma, \tau}, a_{\delta}$  make the function $a_{\sigma}F_{\sigma}(i)+a_{\tau}F_{\tau}(i)+a_{\sigma, \tau}F_{\sigma, \tau}(i)+a_{\delta}F_{\Delta}(i)$ decreasing. There are three free parameters up to scaling, and part (b) of the lemma allows one to describe the polytope where this function is decreasing. In practice, we are mainly interested in the subspace of divisors of the form $\alpha (\psi_{\sigma}.B) + \beta (\psi_{\tau}.B) + \gamma (\Delta_s.B) - \beta (\Delta.B)$ since $\psi_{\tau}-\Delta$ is is functorial with respect to the boundary stratification (see Lemma \ref{L:BoundaryInduction}), whereas $\psi_{\tau}$ and $\Delta$ individually are not. Thus, we are left with the problem of describing a two-dimensional polytope, and this is what we do in the following proposition.

In order to make the numerics tractable, we divide the analysis into several cases. Note that the one case where we get a sharp inequality is case (2.), where we have one section of weight $1$ and $k+1$ sections of weight $1/k$ on a $\P^{1}$-bundle. This corresponds to the fact that the moving components of the extremal rays contracted by the map $\M_{0,\A_{n}^{k}} \rightarrow \M_{0,\A_{n}^{k+1}}$ take precisely this form, and eventually we will see that this inequality determines the precise value of $\alpha$ at which $\M_{0,n}(\alpha)$ transforms from $\M_{0,\A_{n}^{k}}$ into $\M_{0,\A_{n}^{k+1}}$.
\begin{proposition}\label{P:Positivity}
Let $(\C \rightarrow B, \sigman, \taum)$ be an arbitrary complete 1-parameter family of $\A$-stable curves with smooth general fiber. Then the intersection numbers $\psi_{\sigma}.B, \psi_{\tau}.B, \Delta_{s}.B, \Delta.B$ satisfy the following inequalities.
\begin{itemize}
\item[1.]If $m=0$, then for $a > \frac{n-1}{(n-k-1)(k+1)}$ we have
\begin{align*}
a(\psi_{\sigma}.B)+\frac{2a}{n-1}(\Delta_{s}.B)-(\Delta.B) > 0.
\end{align*}
\item[2.] If $m=1$, then for $a > \frac{n-1}{n(k+1)}$ we have
\begin{align*}
\left(a+\frac{1}{n}\right)(\psi_{\sigma}.B)+\frac{2a}{n-1}(\Delta_{s}.B)+(\psi_{\tau}.B)-(\Delta.B) \geq 0.
\end{align*}
Furthermore, equality holds if and only if $n=k+1$.\\
\item[3.]If $m\geq 2,\, 2 \leq n \leq k$, then for $a > 0$ and $b >0$ we have
\begin{align*}
\left(a+\frac{b}{n}\right)(\psi_{\sigma}.B)+\frac{2a}{n-1}(\Delta_{s}.B)+(\psi_{\tau}.B)-(\Delta.B) > 0.
\end{align*}
\item[4.] If $m\geq 2,\, n \geq k+1$, then for $\frac{(k+1)(n-k-1)}{n-1}a+\frac{k+1}{n}b>1\text{ and }b>1$ we have 
\begin{align*}
\left(a+\frac{b}{n}\right)(\psi_{\sigma}.B)+\frac{2a}{n-1}(\Delta_{s}.B)+(\psi_{\tau}.B)-(\Delta.B) > 0.
\end{align*}
\end{itemize}
\end{proposition}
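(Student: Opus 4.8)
The plan is to feed each of the four cases through the monotonicity machine assembled just before the Proposition. For a fixed family I would first use Lemma~\ref{L:Main}(a) to realize the left-hand side of each inequality as the value at $i=0$ of a single linear combination
\[
G(i):=a_\sigma F_\sigma(i)+a_\tau F_\tau(i)+a_{\sigma,\tau}F_{\sigma,\tau}(i)+a_\delta F_\Delta(i).
\]
Matching the coefficients of $\Delta_s$, $\psi_\sigma$, $\psi_\tau$ and $\Delta$ forces the unique choice
\[
a_\sigma=a,\qquad a_{\sigma,\tau}=b,\qquad a_\tau=1-\tfrac{b}{m},\qquad a_\delta=-1
\]
(with the degenerate readings $a_{\sigma,\tau}=1,\ a_\tau=0$ when $m=1$, and only $a_\sigma=a,\ a_\delta=-1$ when $m=0$). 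By Lemma~\ref{L:Main}(a) we then have $G(0)=\text{LHS}$ and $G(N)=0$.

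Next I would telescope. Writing $\text{LHS}=G(0)-G(N)=\sum_{i=0}^{N-1}\bigl(G(i)-G(i+1)\bigr)$ and invoking Lemma~\ref{L:Main}(b), each summand equals the explicit function of the numbers $r_1,r_2$ of light and heavy sections met by the $i$-th contracted $(-1)$-curve,
\[
D(r_1,r_2)=a\,\frac{r_1(n-r_1)}{n-1}+\Bigl(1-\tfrac{b}{m}\Bigr)\frac{r_2(m-r_2)}{m-1}+b\,\frac{r_1(m-r_2)+r_2(n-r_1)}{nm}-1
\]
(and its degenerations for $m=0,1$). The whole proposition thereby reduces to the numerical assertion that $D(r_1,r_2)\geq 0$, with strict inequality away from the designated equality configurations, for every $(r_1,r_2)$ that can actually occur; the four cases are exactly a partition of this numerical problem according to the value of $m$ and the sign of $n-k$.

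It remains to (i) determine the admissible pairs $(r_1,r_2)$ and (ii) minimize $D$ over them. The first constraint is stability condition~(4), $r_1/k+r_2>1$, which bounds $(r_1,r_2)$ from below. In case~1 it gives $r_1\geq k+1$, and since $D=a\,r_1(n-r_1)/(n-1)-1$ is concave in $r_1$ its minimum is attained at an extreme admissible tail, producing the threshold $a>\tfrac{n-1}{(k+1)(n-k-1)}$. In case~2 one runs through $r_2\in\{0,1\}$ and finds the binding tail to be $r_1=k+1,\ r_2=0$, giving $a>\tfrac{n-1}{n(k+1)}$; the all-light tail $r_1=n,\ r_2=0$, available exactly when $n=k+1$, makes $D$ vanish identically in $a$ and is the source of the sharp equality. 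For $m\geq 2$ the dichotomy $n\leq k$ versus $n\geq k+1$ records whether a tail can be destabilized by light sections alone: when $n\leq k$ every admissible tail must carry a heavy section, which is what lets $a,b>0$ suffice, whereas when $n\geq k+1$ the light-only tails return and one needs exactly $\tfrac{(k+1)(n-k-1)}{n-1}a+\tfrac{k+1}{n}b>1$ together with $b>1$ to keep $D>0$.

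The main obstacle is step~(i): condition~(4) by itself does \emph{not} cut out the true admissible set. For $2\leq n\leq k$ and $m\geq 2$, the pair $(r_1,r_2)=(0,m)$ satisfies $r_1/k+r_2>1$ yet gives $D=b-1$, negative for $0<b<1$; if it occurred it would kill case~3. The resolution is that such a tail cannot arise from an $\A$-stable family: the component to which the contracted tail attaches must carry the complementary sections together with the separating node, and stability of that component imposes the further inequality $(n-r_1)/k+(m-r_2)>1$, which for $(0,m)$ reads $n/k>1$ and so fails precisely when $n\leq k$. Because up to $k$ light sections may collide while heavy sections may never coincide, this complementary stability is asymmetric in the two kinds of sections; tracking it is exactly what both excludes the harmful tails in the wrong regime and leaves on the boundary the extremal all-light tail with $k+1$ points (stabilized, when $n=k+1$, by the lone heavy section on the complementary component) that produces the sharp equality in case~2. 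Converting $\A$-stability of the family into the exact admissible polygon for $(r_1,r_2)$, and then locating the minimum of the quadratic-type $D$ at the predicted vertex with the stated equality locus---the configuration governing the wall $\M_{0,\A_n^k}\to\M_{0,\A_n^{k+1}}$---is the technical heart of the argument; the per-case evaluations of $D$ are routine once the polygon is known.
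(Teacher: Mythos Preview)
Your plan is the paper's: realize the left-hand side as $G(0)$ with the coefficients you wrote, telescope to $\sum_i D(r_1,r_2)$, and bound $D$ from below on the polygon cut out by the stability constraint $r_1/k+r_2>1$ together with its complement $(n-r_1)/k+(m-r_2)>1$. One tactical difference in case~2: rather than running through $r_2\in\{0,1\}$, the paper uses the freedom in the blow-down sequence to always contract a $(-1)$-curve disjoint from the unique $\tau$-section (a tree with $\geq 2$ vertices has $\geq 2$ leaves, and at most one carries $\tau$), so $r_2=0$ at every step and only the range $k+1\le r_1\le n-1$ needs checking.

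There is one genuine slip in your account of the equality case. The all-light tail $(r_1,r_2)=(n,0)$ with $n=k+1$ is \emph{not} admissible: the complementary side carries only the single $\tau$-section, giving weight exactly $1$, so the complementary inequality $(n-r_1)/k+(m-r_2)>1$ fails. What actually happens for $(n,m)=(k+1,1)$ is that the admissible polygon is \emph{empty}: no partition of $k+1$ light points and one heavy point gives both sides weight strictly greater than $1$, so every $\A^{k}_{k+1,1}$-stable curve is irreducible, the family is already a $\mathbb P^1$-bundle, $N=0$, and $G(0)=G(N)=0$ for free. This is the mechanism behind the sharp equality, and it is worth getting right since the ``equality iff $n=k+1$'' clause is what drives Corollary~\ref{C:LowerBound} and part~(b) of Theorem~\ref{T:MainTheorem}.
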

\begin{proof} 

\hfill

(1.) We have
$$
aF_{\sigma}(0)-F_{\Delta}(0)=a(\psi_{\sigma}.B)+\frac{2a}{n-1}(\Delta_{s}.B)-(\Delta.B).
$$ 
Thus, it suffices to show that $G(i):=aF_{\sigma}(i)-F_{\Delta}(i)$ is a decreasing function of $i$. The exceptional divisor of $\C_{i} \rightarrow \C_{i+1}$ meets at least $k+1$ sections, so Lemma \ref{L:Main} (b) implies
\[
aF_{\sigma}(i)-F_{\Delta}(i)-\left(aF_{\sigma}(i+1)-F_{\Delta}(i+1) \right)=\frac{r_1(n-r_1)}{n-1}a-1\geq \frac{(k+1)(n-k-1)}{n-1}a-1>0,
\]
when $a > \frac{n-1}{(n-k-1)(k+1)}$.
\hfill

(2.) We have 
 $$
aF_{\sigma}(0)+F_{\sigma,\tau}(0)-F_{\Delta}(0)=(a+\frac{1}{n})(\psi_{\sigma}.B)+\frac{2a}{n-1}(\Delta_{s}.B)+(\psi_{\tau}.B)-(\Delta.B).
$$ 
Thus, it suffices to show that $G(i):=aF_{\sigma}(i)+F_{\sigma,\tau}(i)-F_{\Delta}(i)$ is a descreasing function of $i$. If $n=k+1$ and $m=1$, then the hypothesis of $\A$-stability implies that the original family $\C \rightarrow B$ has no singular fibers, i.e. $\C=\C_{0}$ is already a $\P^{1}$-bundle. In this case,
$$
G(0)=G(N)=0.
$$
Thus, we may assume that $n\geq k+2$, so the exceptional divisor of $\C_{i+1} \rightarrow \C_{i}$ meets $n-1\geq r_1 \geq  k+1$ sections of weight $1/k$. In addition, since there is only one section of weight 1, and we may always choose to blow-down a (-1)-curve disjoint from this section, we may assume that the exceptional divisor  $\C_{i+1} \rightarrow \C_{i}$ meets no sections of weight 1. Thus,
\begin{align*}
G(i)-G(i+1) &=\frac{r_1(n-r_1)}{n-1}a+\frac{r_1}{n}-1\\ 
&\geq \min\left\{\frac{(k+1)(n-k-1)}{n-1}a+\frac{k+1}{n}-1,
a+\frac{n-1}{n}-1\right\}>0,
\end{align*}
when $a > \frac{n-1}{n(k+1)}$.\\

(3.) We have
\[
aF_{\sigma}(0)+bF_{\sigma,\tau}(0)+\frac{m-b}{m}F_{\tau}(0)-F_{\Delta}(0)=(a+\frac{b}{n})(\psi_{\sigma}.B)+\frac{2a}{n-1}(\Delta_{s}.B)+(\psi_{\tau}.B)-(\Delta.B). 
\]
Thus, it suffices to show that the function $G(i):=aF_{\sigma}(i)+bF_{\sigma,\tau}+\frac{m-b}{m}F_{\tau}(i)-F_{\Delta}(i)$ is a decreasing function of $i$. Suppose that the exceptional
divisor of $\C_{i}\ra \C_{i+1}$ meets $r_1$ sections of weight $k$ and $r_2$ sections of weight $1$. We have
\[
G(i)-G(i+1)=\frac{r_1(n-r_1)}{n-1}a + \frac{r_1(m-r_2)+r_2(n-r_1)}{mn}b+\frac{r_2(m-r_2)(m-b)}{m(m-1)}-1
\]
Denote the right-hand side by $\H(r_1,r_2)$. Then $\H$ is a convex function in each variable $r_1$ and $r_2$ (but not 
necessarily in both) and is symmetric about the point
$(n/2,m/2)$. The integer pairs $(r_1,r_2)$ satisfy the inequalities
\begin{itemize}
\item[(i)] $1 \leq r_1 \leq n$,
\item[(ii)] $1\leq r_2 \leq m$,
\item[(iii)]  $r_1/k+r_2>1,$
\item[(iv)] $(n-r_1)/k+(m-r_2)>1.$
\end{itemize}  
When $m\geq 3$, the integers $(r_1,r_2)$ that satisfying (i)-(iv) are easily seen to lie in the convex hull of points $(0,2), (1,1), (n,1)$ and points
$(n,m-2), (n-1,m-1), (0,m-1)$ (the second triple is the reflection of the first triple about $(n/2,m/2)$). 
For $m=2$, the integers $(r_1,r_2)$ are in the convex hull of $(1,1)$ and $(n-1,1)$.

By convexity of $\H$ in each argument we have $$\H(r_1,r_2)\geq \min\{ \H(0,2), \H(1,1), \H(n,1)\}$$ for $m\geq 3$,
and 
$$\H(r_1,r_2)\geq \H(1,1)$$ for $m=2$. To prove the statement of the proposition, we calculate
\begin{align*}
\H(1,1) &=a+\frac{m-2}{mn}b>0, \\
\H(0,2) &=\frac{2b}{m(m-1)}+\frac{m-3}{m-1}>0 \ \ \text{when $m\geq 3$}, \\
\H(n,1) &=\frac{m-2}{m}b>0 \ \ \text{when $m\geq 3$}.
\end{align*}

\hfill

(4.) As in Part (3.), we have
\[
aF_{\sigma}(0)+bF_{\sigma,\tau}(0)+\frac{m-b}{m}F_{\tau}(0)-F_{\Delta}(0)=(a+\frac{b}{n})(\psi_{\sigma}.B)+\frac{2a}{n-1}(\Delta_{s}.B)+(\psi_{\tau}.B)-(\Delta.B). 
\]
To show that the function $G(i):=aF_{\sigma}(i)+bF_{\sigma,\tau}+\frac{m-b}{m}F_{\tau}(i)-F_{\Delta}(i)$ is a decreasing function of $i$, we recall that
if the exceptional
divisor of $\C_{i}\ra \C_{i+1}$ meets $r_1$ sections of weight $k$ and $r_2$ sections of weight $1$, then
\[
G(i)-G(i+1)=\H(r_1,r_2),
\]
where $\H(r_1,r_2)$ is as defined in the proof of part (3.). The integer pairs $(r_1,r_2)$ satisfy the same inequalities (i)-(iv) as in part (3.). By the convexity of $\H$ in each factor we have 
\begin{align*}
\H(r_1,r_2) &\geq \min\bigl\{ \H(0,m), \H(0,2), \H(1,1), \H(k+1,0),  \H(n,0), \H(n,m-2),  \\ & \H(n-1,m-1), \H(n-k-1,m) \bigr\}.
\end{align*}
The statement of the proposition now follows from the following computations
\begin{align*}
\H(0,m) &=\H(n,0)=b-1 \\
\H(0,2) &=\H(n,m-2)=\frac{2b}{m(m-1)}+\frac{m-3}{m-1}\\
\H(1,1) &=\H(n-1,m-1)=a+\frac{m-2}{mn}b\\ 
\H(k+1,0) &=\H(n-k-1,m)=\frac{(k+1)(n-k-1)}{n-1}a+\frac{k+1}{n}b-1.
\end{align*}
\end{proof}

As we indicated in the introduction, we are really interested in divisors of the form
$D(c):=c\psi+(2c-1)\Delta_{s}+\psi_{\tau}-\Delta$. In the following corollary, we will simply record the values of $c$ such that the ray
$$
\{c\psi+(2c-1)\Delta_{s}+\psi_{\tau}-\Delta: c \in \Q \}
$$
intersects the polytope described by the previous proposition.

\begin{corollary}\label{C:Positivity} Let $\A:=\A_{n,m}^{k}$ be a weight vector with $k\geq 2$. If $(\C \rightarrow B, \sigman, \taum)$ is any complete 1-parameter family of $\A$-stable curves with smooth general fiber, we have
$$D_k(c).B:=c(\psi_{\sigma}.B)+(2c-1)\Delta_{s}.B+\psi_{\tau}.B-\Delta.B > 0$$
 in the following cases:
  \begin{align*}
1.\,\, &c=\frac{n-1}{2(n-2)} && \text{when} && m=0,\\
2.\,\, &c=\frac{n+1}{2n} && \text{when} && m=1, n\geq k+2,\\
3.\,\, &1/2<c\leq \frac{k+2}{2(k+1)} && \text{when} && m\geq 2,\, 2 \leq n \leq k,\\
4.\,\, &1/2<c<\frac{n+1}{2n} \text{ with $1<b<n/2$},&& \text{when} && m\geq2,\, n \geq k+1.\\
\intertext{In addition, we have
$
D_k(c).B= 0
$
in the case}
5.\,\, & c=\frac{k+2}{2(k+1)}&& \text{when} && m=1, n=k+1.
\end{align*}
\end{corollary}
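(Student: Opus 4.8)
The plan is to translate the statement about divisors $D_k(c)$ into the language of Proposition~\ref{P:Positivity} by expressing $D_k(c).B$ as a suitable nonnegative combination of the functions $F_\sigma, F_\tau, F_{\sigma,\tau}, F_\Delta$ at $i=0$, and then read off precisely which values of $c$ cause this combination to land inside the respective positivity polytope. The whole corollary is mechanical bookkeeping once one identifies, in each of the five cases, the coefficients $(a,b)$ in Proposition~\ref{P:Positivity} that make the target ray $c\psi_\sigma + (2c-1)\Delta_s + \psi_\tau - \Delta$ coincide with the combination appearing there. I would handle the cases in the same order as the proposition, matching coefficients of $\psi_\sigma$ and $\Delta_s$ to pin down $a$ (and $b$), then verify the resulting $c$ satisfies the stated bound.

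\textbf{Case 1 ($m=0$).} Here Proposition~\ref{P:Positivity}(1.) gives positivity of $a(\psi_\sigma.B)+\tfrac{2a}{n-1}(\Delta_s.B)-(\Delta.B)$ for $a > \tfrac{n-1}{(n-k-1)(k+1)}$. Comparing with $D_k(c).B$ (noting $\psi_\tau=0$) forces $a=c$ and $2c-1=\tfrac{2a}{n-1}=\tfrac{2c}{n-1}$, which solves to $c=\tfrac{n-1}{2(n-2)}$. I would then check this value exceeds the threshold $\tfrac{n-1}{(n-k-1)(k+1)}$ for the relevant range of $k$, which reduces to an elementary inequality in $n,k$.

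\textbf{Cases 2--4 ($m\geq 1$).} For $m=1,2$ I would match $D_k(c).B$ against the combination in Proposition~\ref{P:Positivity}(2.) or (3.)/(4.): since the $\psi_\tau$ and $\Delta$ coefficients already agree ($+1$ and $-1$), I only need $a+\tfrac{b}{n}=c$ and $\tfrac{2a}{n-1}=2c-1$, i.e. $a=\tfrac{(n-1)(2c-1)}{2}$, and then determine $b$ from $b=n(c-a)$. Substituting these into the proposition's admissibility conditions (e.g. $a>\tfrac{n-1}{n(k+1)}$ in Case 2, or the two linear constraints on $(a,b)$ in Case 4) yields exactly the stated ranges for $c$. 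In Case 2 with $n\geq k+2$ one solves $a=\tfrac{n-1}{n(k+1)}$ at the boundary to recover $c=\tfrac{n+1}{2n}$, and in Case 4 the constraint $\tfrac{(k+1)(n-k-1)}{n-1}a+\tfrac{k+1}{n}b>1$ together with $b>1$ translates into $1/2<c<\tfrac{n+1}{2n}$ after eliminating $a$.

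\textbf{Case 5 (equality, $m=1$, $n=k+1$).} This is the sharp case flagged in the discussion before Proposition~\ref{P:Positivity}: by part (2.) of that proposition, equality holds precisely when $n=k+1$, and the family is forced to be a $\P^1$-bundle with no singular fibers, so $D_k(c).B=0$. I would simply set $c=\tfrac{k+2}{2(k+1)}$ (the value making the coefficients match the equality instance) and invoke the equality clause of Proposition~\ref{P:Positivity}(2.). \textbf{The main obstacle} is not any single hard argument but rather the careful verification that in each case the computed $c$ genuinely satisfies the strict inequality defining the proposition's polytope—in particular tracking the auxiliary parameter $b$ in Cases 3 and 4 and confirming the free range $1<b<n/2$ in Case 4 is consistent with the asserted open interval for $c$. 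I expect the bound-checking in Case 4, where two linear constraints in $(a,b)$ must simultaneously hold, to require the most attention.
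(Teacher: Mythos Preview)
Your approach is exactly the paper's: in each case one sets $a=(n-1)(c-\tfrac12)$ and, when $m\geq 2$, $b=n(c-a)=n\bigl(\tfrac{n-1}{2}-(n-2)c\bigr)$, matches $D_k(c)$ to the form in the corresponding part of Proposition~\ref{P:Positivity}, and then checks the threshold inequalities.

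One correction to your Case~2 sketch: the value $c=\tfrac{n+1}{2n}$ is \emph{not} obtained by setting $a$ equal to the threshold $\tfrac{n-1}{n(k+1)}$. For $m=1$, Proposition~\ref{P:Positivity}(2.) has only the single free parameter $a$ (in your notation $b$ is forced to equal $1$), so the two matching equations $a+\tfrac{1}{n}=c$ and $\tfrac{2a}{n-1}=2c-1$ are overdetermined and together force $c=\tfrac{n+1}{2n}$, $a=\tfrac{n-1}{2n}$, independently of $k$. The threshold condition $a>\tfrac{n-1}{n(k+1)}$ is a separate verification, and it holds exactly because $k\geq 2$.
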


\begin{proof}

\hfill

(1.) Observe that if $a=c=\frac{n-1}{2(n-2)}$, 
\begin{align*}
D_k(c)=a(\psi_{\sigma}.B)+\frac{2a}{n-1}(\Delta_{s}.B)-(\Delta.B).
\end{align*}
Note that $a=\frac{n-1}{2(n-2)}> \frac{n-1}{(n-k-1)(k+1)}$ for $k\geq 2$, so the statement follows from Proposition \ref{P:Positivity} (1.).\\

(2.) Observe that if $a=c-1/n=\frac{n-1}{2n}$, 
\begin{align*}
D_k(c)=(a+\frac{1}{n})(\psi_{\sigma}.B)+\frac{2a}{n-1}(\Delta_{s}.B)+(\psi_{\tau}.B)-(\Delta.B).
\end{align*}
Note that $a=\frac{n-1}{2n}\geq \frac{n-1}{n(k+1)}$, and the inequality is strict for $k\geq 2$, so the statement follows 
from Proposition \ref{P:Positivity} (2.).\\

\item(3.) Take $a=(n-1)(c-1/2)$ and $b=n(\frac{n-1}{2}-(n-2)c)$. Then 
\begin{align*}
D_k(c)=(a+\frac{b}{n})(\psi_{\sigma}.B)+\frac{2a}{n-1}(\Delta_{s}.B)+(\psi_{\tau}.B)-(\Delta.B).
\end{align*}
Since $k\geq 2$ and $n\leq k$, for $1/2<c\leq \frac{k+2}{2(k+1)}$ we have $a,b>0$, so the statement follows 
from Proposition \ref{P:Positivity} (3.).\\
\item(4.)
Take $a=(n-1)(c-1/2)$ and  $b=n(\frac{n-1}{2}-(n-2)c)$. Then 
\begin{align*}
D_k(c)=(a+\frac{b}{n})(\psi_{\sigma}.B)+\frac{2a}{n-1}(\Delta_{s}.B)+(\psi_{\tau}.B)-(\Delta.B).
\end{align*}
Note that $a$ and $b$ satisfy the assumptions of Proposition \ref{P:Positivity} (4.) and so the statement follows.
\item(5.) If $m=1$ and $n=k+1$, then
for $a=\frac{k}{2(k+1)}>\frac{n-1}{n(k+1)}$ 
\begin{align*}
D_k\left(\frac{k+2}{2(k+1)}\right)=(a+\frac{1}{k+1})(\psi_\sigma.B)+\frac{2a}{k}(\Delta_s.B)+(\psi_\tau.B)-(\Delta.B)=0
\end{align*}
by Proposition \ref{P:Positivity} (2.).
\end{proof}

Assembling this case-by-case analysis, we have
\begin{corollary}\label{C:LowerBound}
Let $\A:=\A_{n,m}^{k}$ be a weight vector with $k\geq 2$. Then there exists a rational number $c \leq \frac{k+2}{2(k+1)}$ such that $D_{k}(c)$ has non-negative intersection with any complete 1-parameter family of $\A$-stable curves with smooth general fiber. In addition, if $(n,m) \neq (k+1,1)$, we may take $c<\frac{k+2}{2(k+1)}$.
\end{corollary}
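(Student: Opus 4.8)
The plan is to read Corollary~\ref{C:LowerBound} off the case-by-case inequalities of Corollary~\ref{C:Positivity}: for each admissible $\A=\A_{n,m}^{k}$ I will exhibit a single rational $c$, drawn from the relevant part of Corollary~\ref{C:Positivity}, and then check the two numerical assertions ($c\leq\frac{k+2}{2(k+1)}$, and $c<\frac{k+2}{2(k+1)}$ unless $(n,m)=(k+1,1)$) by elementary arithmetic. The first step is to observe that the standing admissibility condition $m+n/k>2$ forces $n>2k$ when $m=0$ and $n>k$ when $m=1$, so that the regimes
\[
\{m=0\},\ \{m=1,\,n=k+1\},\ \{m=1,\,n\geq k+2\},\ \{m\geq2,\ 2\leq n\leq k\},\ \{m\geq2,\ n\geq k+1\}
\]
are pairwise disjoint and cover every vector with $n\geq2$; each falls under the hypotheses of exactly one of the cases (1)--(5) of Corollary~\ref{C:Positivity}.

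Given the partition, the choice of $c$ is dictated by Corollary~\ref{C:Positivity}. I would take $c=\frac{n-1}{2(n-2)}$ for $m=0$ (case (1)); $c=\frac{n+1}{2n}$ for $m=1,\,n\geq k+2$ (case (2)); $c=\frac{k+2}{2(k+1)}$ for $m=1,\,n=k+1$ (case (5)); any $c\in\bigl(\tfrac12,\tfrac{k+2}{2(k+1)}\bigr)$ for $m\geq2,\,2\leq n\leq k$ (case (3)); and any $c\in\bigl(\tfrac12,\tfrac{n+1}{2n}\bigr)$, with $b\in(1,n/2)$ as prescribed, for $m\geq2,\,n\geq k+1$ (case (4)). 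In each regime the cited case of Corollary~\ref{C:Positivity} already gives $D_k(c).B\geq0$ on every admissible family $B$, so the nonnegativity half of the statement comes for free and only the two numerical claims remain.

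For the bound $c\leq\frac{k+2}{2(k+1)}$, the open-interval regimes are automatic: in case (3) the interval lies strictly below $\tfrac{k+2}{2(k+1)}$, and in case (4) one uses $\tfrac{n+1}{2n}\leq\tfrac{k+2}{2(k+1)}$ for $n\geq k+1$, so any admissible $c$ is strictly smaller. For the closed-form values one cross-multiplies: $\tfrac{n+1}{2n}<\tfrac{k+2}{2(k+1)}$ reduces to $n>k+1$, valid in case (2), and $\tfrac{n-1}{2(n-2)}\leq\tfrac{k+2}{2(k+1)}$ reduces to $n\geq k+3$, which follows from $n\geq 2k+1$ when $k\geq2$. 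Thus in cases (1)--(4) the chosen $c$ can be taken strictly below $\tfrac{k+2}{2(k+1)}$, whereas case (5) produces $c=\tfrac{k+2}{2(k+1)}$ with $D_k(c).B=0$; this is exactly the excluded configuration $(n,m)=(k+1,1)$, matching the statement.

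The substance is therefore not any single deep step but the strictness bookkeeping at the boundary of these rational inequalities, and this is where I expect the only real care to be needed. The tightest estimate is the $m=0$ one: $\tfrac{n-1}{2(n-2)}\leq\tfrac{k+2}{2(k+1)}$ becomes an equality exactly when $n=k+3$, which under $n\geq2k+1$ can happen only for the single borderline vector $(n,m,k)=(5,0,2)$; there a direct computation with $F_\sigma$ and $F_\Delta$ shows every admissible family is already a $\P^{1}$-bundle and $D_k\bigl(\tfrac{k+2}{2(k+1)}\bigr).B\equiv0$, so this configuration must be inspected by hand to confirm consistency with the stated strictness. The remaining loose ends are the degenerate low-weight vectors with $m\geq2$ and $n\leq1$, which are not covered by the five cases above but reduce immediately to the nefness of $\psi_\tau-\Delta$ (for $n=0$) by the same monotonicity argument, with $c$ then free to be chosen anywhere at or below $\tfrac{k+2}{2(k+1)}$.
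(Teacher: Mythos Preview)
Your approach is essentially identical to the paper's: both read the corollary off Corollary~\ref{C:Positivity} using exactly the same case decomposition and the same choices of $c$ in each regime, and the arithmetic checks you outline match the paper's line by line.

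You deserve credit for isolating the borderline vector $(n,m,k)=(5,0,2)$, where $\tfrac{n-1}{2(n-2)}=\tfrac{k+2}{2(k+1)}=\tfrac{2}{3}$; the paper's own proof writes only ``$\leq$'' in its case (1) and does not separately justify the strict inequality there. However, your expectation that a hand inspection will ``confirm consistency with the stated strictness'' is too optimistic: the strictness actually \emph{fails} at this vector. As you observe, every $\A_5^2$-stable curve is smooth, so any one-parameter family is already a $\P^1$-bundle; then $F_\sigma(0)=0$ gives $\psi_\sigma.B=-\tfrac{1}{2}\Delta_s.B$, and hence
\[
D_2(c).B=\tfrac{3c-2}{2}\,\Delta_s.B.
\]
Taking any family with $\Delta_s.B>0$ (for instance the test family of Lemma~\ref{L:PsiPush} with four constant sections and a diagonal) yields $D_2(c).B<0$ for every $c<\tfrac{2}{3}$. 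So no $c<\tfrac{k+2}{2(k+1)}$ works here, and the ``in addition'' clause of the corollary is not literally true as stated.

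This does not damage the paper's main argument. The strict form of the corollary is only invoked in the proof of Theorem~\ref{T:MainTheorem}(b), where it is applied to a family whose moving component has weight vector $\A_{n',m'}^k$ with $(n',m')\neq(k+1,1)$, in order to conclude that $B$ is contracted by $\M_{0,\A_{n,m}^k}\to\M_{0,\A_{n,m}^{k+1}}$. For $(5,0,2)$ the target $\A_5^3$ is not an admissible weight vector ($5/3<2$), so this case never enters the induction; equivalently, $D_2(\tfrac{2}{3})\equiv 0$ on $\M_{0,\A_5^2}\cong\M_{0,5}$ is consistent with \emph{every} curve being contracted to a point. Your flag on the $n\leq 1$ vectors is also apt; these are genuinely omitted from Corollary~\ref{C:Positivity}, though for $n=0$ one has $D_k(c)=\psi_\tau-\Delta$, ample on $\M_{0,m}$ by Theorem~\ref{T:ClassicalCase}, and $n=1$ reduces the same way since the lone $\sigma$-section cannot collide and behaves as a $\tau$-section.
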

\begin{proof}
We need only check that thresholds defined in the four parts of Corollary \ref{C:Positivity}
each satisfy $c< \frac{k+2}{2(k+1)}$.
\begin{enumerate}
\item[1.] If $m=0$, then $n\geq 2k+1\geq k+3$, so we may take 
$c=\frac{n-1}{2(n-2)}\leq \frac{k+2}{2(k+1)}$ by Corollary \ref{C:Positivity} (1.) 
\item[2.] If $m=1$, then $n\geq k+1$. If $n=k+1$, then by Corollary \ref{C:Positivity} (5.) we can take $c=\frac{k+2}{2(k+1)}$.
If $n\geq k+2$, then we may take
$c=\frac{n+1}{2n}< \frac{k+2}{2(k+1)}$ by Corollary \ref{C:Positivity} (2.)  . 
\item[3.] 
If $m\geq 2$ and $n\geq k+1$, then we may take any
$1/2<c<\frac{n+1}{2n}\leq  \frac{k+2}{2(k+1)}$ by Corollary \ref{C:Positivity} (4.)
\item[4.] The case of $m\geq 2$ and $n\leq k$ is immediate from Corollary \ref{C:Positivity} (3.).
\end{enumerate}
\end{proof}

\section{Ample divisors on $\M_{0,\A}$}\label{S:Ampleness}
In this section, we will explain how to use the fundamental positivity result of Corollary \ref{C:LowerBound} to deduce the existence of certain ample divisors on $\M_{0,\A}$. As usual, we only consider weight vectors of the form $\A:=\A_{n,m}^{k}$, and when we say that a statement holds for all possible weight vectors, we mean all vectors of this form. For any weight vector $\A$ and $c \in \Q$, we have the divisor class
$$D_{k}(c):=c\psi_{\sigma}+(2c-1)\Delta_{s}+\psi_{\tau}-\Delta,$$
and our main result (Proposition \ref{P:Ampleness} and \ref{P:Ampleness2}) is
$$
D_{k}(c)\text{ is ample on }\M_{0,\A}\text{ if }c \in \left(\frac{k+2}{2k+2}, \frac{k+1}{2k}\right].
$$
The proof of the theorem breaks into several steps. In Section \ref{S:GeneralPositivity}, we prove that $D_{k}(c)$ has positive degree on every curve in $\M_{0,\A}$. In Section \ref{S:Perturbations}, we show that the same statement holds if we perturb $D_{k}(c)$ by a small linear combination of boundary divisors of $\M_{0,\A}$. Since the boundary divisors of $\M_{0,\A}$ generate its Picard group, this implies that $D_{k}(c)$ lies on the interior of the nef cone of $\M_{0,\A}.$ On smooth proper schemes, Kleiman's criterion implies that any divisor which lies on the interior of the nef cone is ample. Since Hassett has shown that $\M_{g,\A}$ is projective using Kollar's semipositivity techiques \cite{Hassett1}, we could stop here. In order to make our argument independent of Kollar's results, however, we explain in Section \ref{S:Kleiman} how to apply Kleiman's criterion without assuming \emph{a priori} that $\M_{0,\A}$ is projective. In general, Kleiman's criterion may fail for algebraic spaces (see \cite{Kollar2}, VI, 2.9.13). But it remains valid under certain hypotheses, and we can check these explicitly for $\M_{0,\A}$. Since it is often easier to construct moduli spaces as algebraic spaces rather than projective schemes, we wish to emphasize the point that it is actually possible to prove projectivity using our explicit intersection theory.

\subsection{Positivity on arbitrary 1-parameter families}\label{S:GeneralPositivity}
The following lemma allows us to pass from positivity results on families with smooth general fiber to positivity results on arbitrary families.
\begin{lemma}\label{L:BoundaryInduction}
Fix $a,b,c \in \Q$ and suppose that, for all weight vectors $\A$, the divisor $D:=a\psi_{\sigma}+b\Delta_{s}+c(\psi_{\tau}-\Delta)$ has non-negative (resp. \!\!positive) degree on any complete 1-parameter family of $\A$-stable curves with smooth general fiber. Then, for all weight vectors $\A$, $D$ has non-negative (resp.\!\! positive) degree on any complete 1-parameter family of $\A$-stable curves. In particular, $D$ is nef on each space $\M_{0,\A}.$
\end{lemma}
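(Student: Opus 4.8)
Lemma L:BoundaryInduction — I want to upgrade positivity on families with smooth general fiber to positivity on *all* families. The natural tool is induction on the number of nodes (or singular fibers) in the generic fiber, using the inductive/product description of the boundary of M_{0,A}.

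Let me think about the structure. A 1-parameter family over B whose general fiber is *nodal* maps B into the boundary. The boundary divisors Δ_{S_1,S_2} are themselves products of smaller moduli spaces M_{0,A^k_{n_1,1}} × M_{0,A^k_{n_2,1}}. So a family whose generic fiber is a two-component curve is, up to normalization, built from two families of lower-dimensional/fewer-marked-point type. The key is that D = aψ_σ + bΔ_s + c(ψ_τ - Δ) should restrict/pull back functorially to this boundary stratum as a sum of the analogous divisors on the two factors, PLUS the self-intersection/attaching contributions. The phrase "ψ_τ - Δ is functorial with respect to the boundary stratification" (flagged earlier) is the crucial hint: the ψ_τ from the new attaching node on each factor cancels against the -Δ contribution from that node.

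So here's my plan.

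First, reduce to the case where the generic fiber is *irreducible*. If the general fiber of C → B is reducible (generically two components joined at nodes, by genus-zero combinatorics each component is a P^1-tree), then B lands in a boundary divisor Δ_{S_1,S_2} ≅ M_{0,A^k_{n_1,1}} × M_{0,A^k_{n_2,1}}. Pull the family back to the normalization; it splits as two families over B, each with one extra weight-1 section (the preimage of the attaching node). Using the product structure and the push-pull formulae (Lemmas L:Pullback, L:PullbackX, and the explicit restriction of ψ_σ, ψ_τ, Δ_s, Δ to a boundary divisor), express D.B as the sum of two copies of the *same* divisor class D evaluated on the two factor families. The functoriality of ψ_τ − Δ is exactly what makes the attaching-node bookkeeping cancel, so that D on M_{0,A^k_n} restricts to D on each M_{0,A^k_{n_i,1}} with no cross terms. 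This is the step I expect to be the main obstacle — verifying the restriction formulae carefully and checking that the node contributions cancel with the right coefficients, so that c must appear symmetrically in front of ψ_τ and −Δ (which is why the hypothesis is phrased with c(ψ_τ − Δ) rather than independent coefficients). I would organize this as a separate sublemma recording the restriction of D to a boundary divisor.

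Second, run the induction. The base case is a family whose generic fiber is irreducible — either smooth (handled directly by hypothesis) or irreducible with nodes. Since a genus-zero irreducible curve with a node would have a nonseparating node, which is impossible for arithmetic genus zero, an irreducible nodal total space actually has *smooth* generic fiber after normalizing marked points — more carefully, for genus-zero A-stable curves a generically-nodal family must have reducible generic fiber, so the irreducible case *is* the smooth-general-fiber case, covered by hypothesis. For the inductive step, each factor family over B has strictly fewer marked points (n_1, n_2 < n, with n_1 + n_2 = n), so by induction D has non-negative (resp. positive) degree on each, and D.B is their sum, hence non-negative (resp. positive). The induction terminates because n strictly decreases. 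Finally, since every complete 1-parameter family of A-stable curves arises as a map B → M_{0,A} and the nef cone is detected by such curves (M_{0,A} being a smooth proper variety, or via the Kleiman-type criterion of Section 4.3), non-negativity of D.B on all families B is exactly the statement that D is nef on each M_{0,A}. The "resp. positive" parenthetical propagates through verbatim, giving the final clause.

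One technical point I would be careful about: when the generic fiber has several nodes or more than two components, I peel off one node at a time, so the reducible case should be phrased as "at least one node in the generic fiber forces B into some Δ_{S_1,S_2}," and then I induct on the *total number of marked points and nodes* to handle multi-component degenerations uniformly, always reducing to two smaller families each carrying one additional weight-1 section.
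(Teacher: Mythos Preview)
Your proposal is correct and follows essentially the same approach as the paper. Both arguments hinge on the functoriality statement that $D$ restricts to the divisor of the same name on each factor of a boundary stratum (with the cancellation between the new $\psi_\tau$ at the attaching point and the $-\Delta$ contribution being precisely why $\psi_\tau$ and $\Delta$ must carry the same coefficient $c$), and then reduce an arbitrary curve $B$ to curves with smooth general fiber on lower-dimensional factors. The only organizational difference is that you peel off one node at a time and induct on $n$, whereas the paper passes directly to the deepest boundary stratum containing the general point of $B$ and writes $B \equiv B_1 + \cdots + B_l$ with each $\pi_i(B_i)$ having smooth general fiber; these are equivalent bookkeeping choices.
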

\begin{proof}
Simply observe that, if $\Delta_{S_1,S_2} \subset \M_{0,\A}$ is any boundary divisor, and
$$
\phi: \M_{0,\A_1} \times \M_{0,\A_2} \rightarrow \Delta_{S_1,S_2} \subset \M_{0,\A}
$$
is the natural gluing isomophism, then $D$ pulls back to the divisor of the same name on each factor, since
\begin{align*}
\phi^*(\psi_{\tau}-\Delta)&=\pi_1^{*}(\psi_{\tau}-\Delta) \times \pi_{2}^{*}(\psi_{\tau}-\Delta) \\
\phi^*(\psi_{\sigma})&=\pi_1^{*}(\psi_{\sigma}) \times \pi_{2}^{*}(\psi_{\sigma}) \\
\phi^*(\Delta_{s})&=\pi_1^{*}\Delta_{s} \times \pi_{2}^{*}\Delta_{s}.\\
\end{align*}

By induction, we have $\phi^*D=\pi_1^*D \times \ldots \times \pi_l^*D$ for an arbitrary boundary stratum
$
\phi: \prod_{j=1}^{l} \M_{0,\A_j} \rightarrow \M_{0,\A}.
$
Any curve $B \subset \M_{0,\A}$, whose general point passes through the interior of this stratum is numerically equivalent to $B_1+\ldots +B_l$, where $B_{j}$ lies in a fiber of 
$\prod_{j=1}^{l}\M_{0,\A_j} \rightarrow \prod_{j \neq i}\M_{0,\A_j},$
and the general point of $B_{i}$ lies in the interior of  $\M_{0,\A_i}$.
Thus, 
$$
D.B=\sum_{i=1}^{l}(\pi_i^*D).B_{i}=\sum_{i=1}^{l}D.\pi_i(B_i) >0.$$ 
Since every curve $B \subset \M_{0,\A}$ meets the interior of some boundary stratum, we are done.
\end{proof}

In order to obtain positivity statements for the divisors $D_{k}(c)$, we will now apply the positivity results of Section 3. Since the relevant statements for $\M_{0,n+m}$ are already known, we will record this case separately.

\begin{theorem}[Case $k=1$]\label{T:ClassicalCase}
Suppose that $n\geq 5$ or $m\geq 1$. For $c > 2/3$, the divisor $D_{1}(c)=c\psi_\sigma+\psi_\tau-\Delta$ is ample on $\M_{0,\A^1_{n,m}}=\M_{0,n+m}$.
\end{theorem}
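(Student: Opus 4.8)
The plan is to reduce to the classical Deligne--Mumford space and then run the machinery of Section~\ref{S:Positivity}. When $k=1$ the weight vector $\A^1_{n,m}$ assigns weight $1$ to all $n+m$ marked points, so no two sections may collide: $\M_{0,\A^1_{n,m}}=\M_{0,n+m}$, $\Delta_s=0$, the class $\psi_\sigma+\psi_\tau=\psi$ is the total $\psi$-class, and $\Delta$ is the full boundary. Thus $D_1(c)=c\psi_\sigma+\psi_\tau-\Delta=(\psi-\Delta)+(c-1)\psi_\sigma$ on a smooth projective variety, and by Kleiman's criterion it suffices to show that $D_1(c)$ has strictly positive degree on every irreducible curve and lies in the interior of the nef cone.

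First I would reduce positivity to families with smooth general fiber. Since $\phi^{*}\psi_\sigma$, $\phi^{*}(\psi_\tau-\Delta)$ and $\phi^{*}\Delta_s$ restrict to the classes of the same name on the factors of every boundary stratum, Lemma~\ref{L:BoundaryInduction} reduces the problem to proving $D_1(c)\cdot B>0$ for every complete one-parameter family with smooth general fiber over every weight vector $\A^1_{n',m'}$ (with $n'\ge 5$ or $m'\ge 1$). Note that the boundary strata of $\M_{0,n+m}$ are products of spaces $\M_{0,\A^1_{n_i,1}}$, which is exactly why the case $m'\ge 1$ must be proved even when one starts from $m=0$.

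The heart of the argument is the telescoping computation of the Key Lemma~\ref{L:Main}. Setting $G(i):=a_\sigma F_\sigma(i)+a_\tau F_\tau(i)+a_{\sigma,\tau}F_{\sigma,\tau}(i)-F_\Delta(i)$ with parameters chosen so that $a_\sigma+a_{\sigma,\tau}/n=c$ and $a_\tau+a_{\sigma,\tau}/m=1$ (taking $a_\tau=a_{\sigma,\tau}=0$, $a_\sigma=c$ when $m=0$), part~(a) gives $G(0)=D_1(c)\cdot B$ and $G(N)=0$, so $D_1(c)\cdot B=\sum_i\bigl(G(i)-G(i+1)\bigr)$, and part~(b) evaluates each summand as $\H(r_1,r_2)$, the function from the proof of Proposition~\ref{P:Positivity} specialized to $k=1$. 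For $m=0$ this is $\H(r_1,0)=c\,r_1(n-r_1)/(n-1)-1$, whose minimum over the admissible range is attained at the extreme admissible value of $r_1$ and equals $c\cdot 2(n-2)/(n-1)-1$, which is positive precisely when $c>\tfrac{n-1}{2(n-2)}$; and $\tfrac{n-1}{2(n-2)}\le \tfrac23$ exactly when $n\ge 5$. For $m\ge 1$ the same convexity-in-each-variable analysis as in Proposition~\ref{P:Positivity} bounds $\H$ below by its values at the vertices of the admissible polytope, and one checks these are positive for $c>2/3$.

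The main obstacle is controlling the range of $(r_1,r_2)$ along the contraction sequence. On the degenerate extremes, where all sections of one type are swept onto the contracted $(-1)$-curve (i.e.\ $r_1\in\{0,n\}$ or $r_2\in\{0,m\}$), the naive choice $a_{\sigma,\tau}=0$ makes $\H\le 0$, so these must be excluded or absorbed. I would handle this in two ways: by ordering the blow-downs so that marked points migrate onto each curve before it is contracted toward a fixed terminal component retaining at least two sections, which keeps $2\le r_1+r_2\le n+m-2$ and, in the pure case, $2\le r_1\le n-2$; and, for $m\ge 1$, by taking $a_{\sigma,\tau}>1$, so that the mixed term contributes $a_{\sigma,\tau}-1>0$ on precisely the configurations $(n,0)$ and $(0,m)$. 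The hypothesis ``$n\ge 5$ or $m\ge 1$'' is exactly what guarantees that the relevant sweeping involves at most three singleton legs, keeping the minimum of $\H$ positive for $c>2/3$. Since all these inequalities are strict and uniform, $D_1(c)$ survives a small perturbation by boundary divisors and hence lies in the interior of the nef cone, yielding ampleness; alternatively, one may simply invoke the known description of ample divisors on $\M_{0,n+m}$, recording this case separately only to seed the induction on $k$ in the sequel.
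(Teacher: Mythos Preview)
Your approach is genuinely different from the paper's. The paper does not run the Section~\ref{S:Positivity} machinery at all for $k=1$; its proof of Theorem~\ref{T:ClassicalCase} is a two-line citation: for $c=1$ the divisor $D_1(1)$ is proportional to $K_{\M_{0,n+m}}+\Delta$, which is ample by Keel--McKernan, and the full range $c>2/3$ is then quoted from Simpson's thesis. In other words, the paper treats $k=1$ as a known black box used only to seed the induction, which is exactly the ``alternative'' you mention in your final sentence.

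Your self-contained approach via Lemma~\ref{L:Main} and Proposition~\ref{P:Positivity} is viable and, interestingly, a version of it appears commented out in the paper's source. A few points of caution. First, your ``ordering the blow-downs'' trick for the pure case $m=0$ does work (fix a leaf with $\ge 2$ sections as the terminal $\P^1$-bundle and always contract a different leaf; then every contracted leaf carries $r_1\ge 2$ sections and its complement carries $\ge 2$, so $2\le r_1\le n-2$), but for $m\ge 2$ it cannot eliminate the configurations $(n,0)$ and $(0,m)$: a two-component fiber with all $\sigma$'s on one side and all $\tau$'s on the other forces one of these. So you genuinely need your second device, taking $b=a_{\sigma,\tau}>1$; this is precisely the content of Proposition~\ref{P:Positivity}(4) specialized to $k=1$, and one must still verify the remaining vertex values $\H(2,0)$, $\H(1,1)$, $\H(0,2)$ are positive for the chosen $a,b$ when $c>2/3$ --- which they are, but it is not automatic. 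Second, your passage from ``strict and uniform inequalities'' to ``interior of the nef cone'' is the content of the perturbation argument in Section~\ref{S:Perturbations}; it goes through because each blow-down alters exactly one $\Delta_{i,j}$-count by $\pm 1$, so a perturbation $\sum\epsilon_{i,j}\Delta_{i,j}$ shifts each step $G(i)-G(i+1)$ by at most $\max|\epsilon_{i,j}|$, but you should say this rather than assert it. With those details filled in, your argument gives an independent proof; the paper simply chose brevity over self-containment here.
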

\begin{proof}
When $c=1$, by 
Lemma \ref{L:PushforwardClass}, the divisor 
$D_1(1)$ is a positive rational multiple of the divisor $K_{\M_{0.n+m}}+\Delta$ which is ample by \cite[Lemma 3.6]{KeelMcKernan}. The general statement follows from Theorem 2.5.2 and Corollary 2.5.5 of Simpson's thesis \cite{Simpson}.

\end{proof}



\begin{theorem}[Main Theorem]\label{T:MainTheorem}
Let $\A:=\A_{n,m}^{k}$ be an arbitrary weight vector.
\begin{itemize}
\item[(a)] For $c \in (\frac{k+2}{2k+2}, \frac{k+1}{2k}]$, the divisor $D_{k}(c)$ has positive intersection with every curve in $\M_{0,\A}$.
\item[(b)] For $c = \frac{k+2}{2k+2}$, the divisor $D_{k}(c)$ is nef on $\M_{0,\A}$. Furthermore, it has degree zero precisely on those curves contracted by the reduction morphism
$
\M_{0,\A_{n,m}^{k}} \rightarrow \M_{0,\A_{n,m}^{k+1}}.
$
\end{itemize}
\end{theorem}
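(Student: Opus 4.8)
===

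The plan is to prove Theorem \ref{T:MainTheorem} by induction on the weight $k$, using the positivity results of Section 3 as the base case and the pull-back formula of Lemma \ref{L:Pullback} to climb the inductive ladder. The two parts (a) and (b) are intertwined: part (b) at level $k$ is what feeds the induction at level $k+1$, so I would organize the argument to prove both simultaneously, assuming (b) holds for all weights $\A_{n',m'}^{k'}$ with $k' < k$ and deducing both (a) and (b) at level $k$. The base case $k=1$ is precisely Theorem \ref{T:ClassicalCase}, where $D_1(c)$ is ample on $\M_{0,n+m}$ for $c > 2/3$, giving part (a), while the nef threshold behavior at $c = 3/4$ (the value $\frac{k+2}{2k+2}$ for $k=1$) furnishes the base of part (b).

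The key mechanical input is that $D_k(c)$ is compatible with the reduction morphism. First I would verify, using Lemma \ref{L:Pullback}, that the pull-back of $D_{k+1}(c)$ under $\phi : \M_{0,\A_{n,m}^{k}} \to \M_{0,\A_{n,m}^{k+1}}$ equals $D_k(c)$ exactly when $c = \frac{k+2}{2k+2}$ — in other words, the coefficients of the exceptional divisor $F$ contributed by $\phi^{*}\psi_\sigma = \psi_\sigma - (k+1)F$, $\phi^{*}\Delta_s = \Delta_s + \binom{k+1}{2}F$, and $\phi^{*}\Delta = \Delta - F$ must cancel; a short computation shows the net coefficient of $F$ is $-(k+1)c + (2c-1)\binom{k+1}{2} + 1$, which vanishes precisely at the endpoint $c = \frac{k+2}{2k+2}$. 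This is the structural reason the interval is half-open on the left. Granting this, if part (b) holds at level $k$, then $D_{k+1}(\frac{k+2}{2k+2})$ pulls back to a nef divisor that is zero exactly on $\phi$-contracted curves, so $D_{k+1}$ at its own left endpoint is nef with the correct degeneracy locus — but one must be careful, since the endpoint for level $k+1$ is $\frac{k+3}{2k+4}$, not $\frac{k+2}{2k+2}$, so the pull-back comparison must be set up at the right value.

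For the main positivity statement, I would reduce via Lemma \ref{L:BoundaryInduction} to families with smooth general fiber, since $D_k(c)$ has the functorial form $a\psi_\sigma + b\Delta_s + c(\psi_\tau - \Delta)$ required by that lemma. On such families, Corollary \ref{C:LowerBound} produces some $c_0 \leq \frac{k+2}{2k+2}$ for which $D_k(c_0)$ is non-negative, and because $\psi_\sigma$, $\Delta_s$, and $\psi_\sigma + 2\Delta_s/(n-1)$ are nef-positive in the relevant directions, increasing $c$ only improves positivity; this should give strict positivity of $D_k(c).B > 0$ for $c$ strictly above the threshold on every smooth-general-fiber family, hence on every curve by the boundary induction. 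For part (b), the delicate point is the characterization of the degree-zero locus: Corollary \ref{C:LowerBound} records that equality $D_k(c).B = 0$ occurs at $c = \frac{k+2}{2k+2}$ only in the borderline configuration $(n,m)=(k+1,1)$ of Corollary \ref{C:Positivity}(5.), which is exactly the family of $\P^1$-bundles with $k+1$ coincident-weight sections colliding — the moving curves in the extremal ray contracted by $\M_{0,\A_{n,m}^{k}} \to \M_{0,\A_{n,m}^{k+1}}$. I expect the main obstacle to be this last identification: one must match the abstract equality case of the numerical inequality with the concrete geometry of $\phi$-contracted curves, and show that after the boundary-stratum decomposition of Lemma \ref{L:BoundaryInduction}, a curve has total degree zero if and only if each of its components is of this contracted type, ruling out cancellation between strata where the divisor could be negative on one piece. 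Handling the interaction between the $\chi$-replacement morphism of Lemma \ref{L:PullbackX} and the inductive step — needed to reduce weight-one sections to clusters of weight-$1/k$ sections when $m$ varies — is the other place where care is required.
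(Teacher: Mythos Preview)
Your overall strategy matches the paper's: induction on $k$ with base case $k=1$, establishing positivity at the upper endpoint via the pull-back identity and the inductive hypothesis, then combining with Corollary~\ref{C:LowerBound} and Lemma~\ref{L:BoundaryInduction}. Two corrections are needed.

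First, your pull-back computation is correct, but your subsequent self-doubt is misplaced: the value $\frac{k+2}{2k+2}$ \emph{is} the upper endpoint $\frac{(k+1)+1}{2(k+1)}$ at level $k+1$, so the comparison is already set up at the right value. In the paper's indexing (working at level $k$ via $\phi:\M_{0,\A_{n,m}^{k-1}}\to\M_{0,\A_{n,m}^{k}}$), one checks $\phi^{*} D_k\bigl(\frac{k+1}{2k}\bigr) = D_{k-1}\bigl(\frac{k+1}{2k}\bigr)$ and observes $\frac{k+1}{2k}=\frac{(k-1)+2}{2(k-1)+2}$; the inductive hypothesis (b) at level $k-1$ then says this pull-back is nef and vanishes only on $\phi$-contracted curves, so $D_k\bigl(\frac{k+1}{2k}\bigr)$ is strictly positive on every curve in $\M_{0,\A}$.

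Second, and more substantively, the claim that ``increasing $c$ only improves positivity'' is not justified: it would require $\psi_\sigma+2\Delta_s$ to be nef on $\M_{0,\A}$, which is neither proved nor asserted anywhere in the paper. The paper instead writes $D_k(c)$, for $c\in\bigl(\frac{k+2}{2k+2},\frac{k+1}{2k}\bigr]$, as a convex combination
\[
D_k(c)=\lambda\, D_k\Bigl(\tfrac{k+1}{2k}\Bigr) + (1-\lambda)\, D_k(c_0), \qquad 0<\lambda\leq 1,
\]
where $c_0\leq \frac{k+2}{2k+2}$ is supplied by Corollary~\ref{C:LowerBound}. Strict positivity of the first summand (just established) plus non-negativity of the second gives $D_k(c).B>0$ on every generically smooth family, and Lemma~\ref{L:BoundaryInduction} upgrades this to all curves. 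You have both ingredients in hand; you just need to combine them this way rather than via an unproved monotonicity claim.

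Finally, the $\chi$-morphism of Lemma~\ref{L:PullbackX} plays no role in this proof (it enters only later, in Proposition~\ref{P:Ampleness2}), and there is no cancellation issue to worry about: Lemma~\ref{L:BoundaryInduction} already guarantees non-negativity on each stratum separately, so a curve has $D_k\bigl(\frac{k+2}{2k+2}\bigr)$-degree zero if and only if every moving component of its generic fiber falls into the borderline case $(n,m)=(k+1,1)$ of Corollary~\ref{C:LowerBound}, which is exactly the description of curves contracted by $\M_{0,\A_{n,m}^{k}}\to\M_{0,\A_{n,m}^{k+1}}$.
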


\begin{proof}
We proceed by induction on $k$. For the case $k=1$, we have Proposition \ref{T:ClassicalCase}, so we may assume that $k \geq 2.$

First, let us show that $D_{k}(c)$ is positive when $c=\frac{k+1}{2k}$.
Consider the reduction morphism 
$$\phi : \M_{0,\A_{n,m}^{k-1}} \rightarrow \M_{0,\A_{n,m}^{k}}.$$
Using Lemma \ref{L:Pullback}, one easily checks that
$$
\phi^*\left( D_{k}\left(\frac{k+1}{2k} \right) \right)=D_{k-1}\left( \frac{k+1}{2k} \right).
$$
Since $\frac{k+1}{2k}=\frac{(k-1)+2}{2(k-1)+2}$, the induction hypothesis implies that $D_{k-1}(\frac{k+1}{2k})$ is nef and has degree zero only on curves contracted by $\phi$. It follows that $D_{k}(\frac{k+1}{2k})$ has positive degree on all curves in $\M_{0,\A}$.

Now suppose that $(\C \rightarrow B, \{\sigma_i\}_{i=1}^{n}, \taum)$ is a generically smooth complete 1-parameter family of $\A$-stable curves, for some weight vector $\A$. By Corollary \ref{C:LowerBound}, there exists $c_{0} \leq \frac{k+2}{2k+2}$, such that $D_{k}(c_0).B \geq 0$. For any $c \in (\frac{k+2}{2k+2}, \frac{k+1}{2k}]$, the divisor $D_{k}(c)$ can be written as a convex linear combination
$$D_{k}(c)=\lambda D_{k}\left(\frac{k+1}{2k}\right)+ (1-\lambda)D_{k}(c_0), \,\,\text{ with $0<\lambda \leq 1$.}$$
Since we have already shown that $D_{k}(\frac{k+1}{2k}).B>0$, it follows that $D_{k}(c).B>0$ for all $c \in (\frac{k+2}{2k+2}, \frac{k+1}{2k}]$. By Lemma \ref{L:BoundaryInduction}, $D_{k}(c)$ has positive intersection on every curve in $\M_{0,\A}$. This completes the proof of part (a).

It remains to prove part (b). The fact that $D_{k}(\frac{k+2}{2k+2})$ is nef is immediate from part (a), since the nef cone is closed. Furthermore, if $(\C \rightarrow B, \{\sigma_i\}_{i=1}^{n}, \taum)$ is a generically smooth complete 1-parameter family of $\A$-stable curves, where $\A:=\A_{n,m}^{k}$ satisfies $(n,m) \neq (k+1,1)$, then Corollary \ref{C:LowerBound} gives $c_{0} < \frac{k+2}{2k+2}$, such that $D_{k}(c_0).B \geq 0$. Arguing as in the previous paragraph, we conclude that $D_{k}(\frac{k+2}{2k+2}).B>0.$ It follows that if  $(\C \rightarrow B, \{\sigma_i\}_{i=1}^{n})$ is a 1-parameter family on which $D_{k}(c)$ has degree zero, then every moving component of the generic fiber of $\C \rightarrow B$ must have $k+1$ marked points of weight $1/k$ and must be attached to the rest of the fiber in a single point. Equivalently, $B$ is contained in the fiber of the reduction morphism 
$
\M_{0,\A_{n,m}^{k}} \rightarrow \M_{0,\A_{n,m}^{k+1}}.
$
\end{proof}

\subsection{Perturbations of the fundamental divisor class $D_{k}(c)$}\label{S:Perturbations}

Theorem \ref{T:MainTheorem} nearly implies that $D_{k}(c)$ is ample on $\M_{0,\A}$, but of course one cannot always check ampleness simply by testing positivity on curves. In order to prove that $D_{k}(c)$ is ample, we will show that the divisor remains nef when perturbed by a small linear combination of boundary divisors. In fact, it is enough to consider perturbations by $S_{n} \times S_{m}$\nb-equivariant divisors, as we explain in Lemma \ref{L:PerturbationLemma}. The same methods used in the proof of Theorem \ref{T:MainTheorem} are easily adapted to prove this stronger statement. Thoughout this section, we write $\Delta_{i,j} \subset \M_{0,\A}$ to denote the sum of all irreducible components of the boundary of the form $\Delta_{S_1, S_2}$ where $S_1$ is a subset of $i$ weight $1/k$ sections and $j$ weight $1$ sections of the universal curve. Note that $\Delta_{i,j}$ is invariant under the action 
of $S_{n} \times S_{m}$ by definition.

\begin{lemma}\label{L:PerturbationLemma}
Suppose that, for a weight vector $\A$ and $c_0 \in (\frac{k+2}{2k+2}, \frac{k+1}{2k})$, there exists  
$\epsilon=\epsilon(k,n,m,c_0)>0$ such that for all $\epsilon_{i,j} \in \Q \cap [-\epsilon,\epsilon]$ and 
all $c\in \Q\cap [c_0-\epsilon,c_0+\epsilon]$, the divisor
 $$D_{k}(c)+\sum_{i,j}\epsilon_{i,j}\Delta_{i,j}=D_k(c_0)+(c-c_0)(\psi_\sigma+2\Delta_s)+\sum_{i,j}\epsilon_{i,j}\Delta_{i,j}$$ has positive intersection on any complete 1-parameter family of $\A$-stable curves. Then $D_{k}(c_0)$ is ample on $\M_{0,\A}$.
 \end{lemma}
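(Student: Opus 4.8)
The plan is to deduce ampleness from the hypothesized perturbation-positivity via Kleiman's criterion, using the fact that the boundary divisors $\Delta_{i,j}$ span $N^1(\M_{0,\A})$ up to the finitely many classes $\psi_\sigma$, $\Delta_s$, $\psi_\tau-\Delta$ already controlled by $D_k(c_0)$. First I would observe that it suffices to show $D_k(c_0)$ lies in the interior of the nef cone: on a smooth proper scheme Kleiman's criterion identifies the interior of the nef cone with the ample cone, and (as the authors note) for algebraic spaces this is checked separately in Section \ref{S:Kleiman}. So the heart of the matter is to show $D_k(c_0)$ is an interior point of $\text{Nef}(\M_{0,\A})$, i.e. that every small perturbation of $D_k(c_0)$ remains nef.

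The key step is to translate ``interior of the nef cone'' into a statement about a spanning set of perturbation directions. I would invoke the fact, stated earlier in the excerpt, that the boundary divisors generate $\Pic(\M_{0,\A})=N^1(\M_{0,\A})$. Hence an arbitrary class near $D_k(c_0)$ can be written as $D_k(c_0)$ plus a small $\Q$-linear combination of boundary divisors. The hypothesis already handles perturbations of the form $(c-c_0)(\psi_\sigma+2\Delta_s)+\sum_{i,j}\epsilon_{i,j}\Delta_{i,j}$, which move $c$ along the defining ray and perturb by the \emph{symmetric} boundary classes $\Delta_{i,j}$; the remaining directions needed to span are the non-symmetric boundary divisors $\Delta_{S_1,S_2}$. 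The plan is to reduce perturbations by arbitrary boundary divisors to the symmetric ones by an averaging argument: since $D_k(c_0)$ is $S_n\times S_m$-invariant (it is built from the symmetric tautological classes $\psi_\sigma,\psi_\tau,\Delta_s,\Delta$), and since positivity of a divisor $D$ on all curves is equivalent to positivity of its symmetrization $\frac{1}{|G|}\sum_{g\in G} g^*D$ on all curves (because $g$ permutes the curves and preserves intersection numbers), I can symmetrize any small perturbation and reduce to the case covered by the hypothesis. This shows $D_k(c_0)$ remains nef under every sufficiently small perturbation, i.e. it is interior to the nef cone.

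Concretely I would argue as follows. Let $D'$ be any divisor class with $\lVert D'-D_k(c_0)\rVert$ small. Write $D'=D_k(c_0)+\eta$ with $\eta$ a small combination of boundary divisors. For a fixed curve $B$, the quantity $D'.B$ is a linear function of the perturbation, and averaging over the group $G:=S_n\times S_m$ gives $\frac{1}{|G|}\sum_{g\in G}(g^*D').(B) = D_k(c_0).B + \bar\eta.B$, where $\bar\eta$ is the $G$-average of $\eta$, which is a small combination of the symmetric classes $\Delta_{i,j}$ together with a multiple of $\psi_\sigma+2\Delta_s$ absorbed into moving $c$. By hypothesis this symmetrized perturbation has positive degree on every $B$ once the coefficients lie in $[-\epsilon,\epsilon]$. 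Since $\{g^*B : g\in G\}$ is just another family of curves and each $g^*D'$ is nef iff $D'$ is (as $g$ is an automorphism), positivity of the average on all curves forces $D'$ itself to be nef: indeed if $D'.B_0<0$ for some curve $B_0$ then applying the same inequality to the curves $g^{-1}(B_0)$ and averaging would contradict the positivity of the symmetrized class. Therefore $D'$ is nef for all $D'$ in a neighborhood of $D_k(c_0)$, so $D_k(c_0)$ is interior to the nef cone and hence ample.

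The main obstacle I anticipate is the averaging reduction: one must verify carefully that symmetrizing the perturbation $\eta$ really lands in the span of the $\Delta_{i,j}$ plus the direction $\psi_\sigma+2\Delta_s$ along which $c$ varies, and that the bookkeeping of $\epsilon$-bounds survives averaging (the averaged coefficients are convex combinations of the originals, hence still lie in $[-\epsilon,\epsilon]$, so this should go through cleanly). A secondary subtlety is the logical step from ``the $G$-average of $D'$ is positive on all curves'' to ``$D'$ is nef''; this requires that $G$ acts on $\M_{0,\A}$ by automorphisms permuting the curve classes, which holds since the weights in $\A=\A_{n,m}^k$ are symmetric in the $n$ weight-$1/k$ points and in the $m$ weight-$1$ points. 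Once these two points are in place, Kleiman's criterion (in the form established for $\M_{0,\A}$ in Section \ref{S:Kleiman}) finishes the argument.
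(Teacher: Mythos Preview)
Your averaging step contains a genuine gap. You correctly identify that the issue is to show $D_k(c_0)$ lies in the interior of the nef cone, and that the hypothesis only controls \emph{symmetric} perturbations. But the deduction ``positivity of the $G$-average of $D'$ on all curves implies $D'$ is nef'' is false. Concretely, if $D'.B_0<0$ for some curve $B_0$, then $\bar D'.B_0=\tfrac{1}{|G|}\sum_g D'.\bigl(g^{-1}(B_0)\bigr)$ is an average in which only one summand is known to be negative; the others may well be large and positive, so $\bar D'.B_0>0$ is no contradiction. For a direct counterexample in spirit, take $\eta=\Delta_{S_1,S_2}-\Delta_{S_1',S_2'}$ with $(S_1,S_2)$ and $(S_1',S_2')$ in the same $G$-orbit: then $\bar\eta=0$, so the symmetrized perturbation is $D_k(c_0)$ itself, yet $D_k(c_0)+t\eta$ need not be nef for small $t$. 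Thus knowing only symmetric perturbations stay nef does \emph{not} place $D_k(c_0)$ in the interior of $\mathrm{Nef}(\M_{0,\A})$.

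The paper resolves this by descending to the quotient $\pi:\M_{0,\A}\to\M_{0,\A}/(S_n\times S_m)$. Since $D_k(c_0)$ is $G$-invariant, it is $\pi^*D_k'(c_0)$ for some class on the quotient, and the key observation is that $\Pic_\Q$ of the quotient is generated by the images of $\psi_\sigma+2\Delta_s$ and the $\Delta_{i,j}$. Hence the hypothesis \emph{does} exhibit $D_k'(c_0)$ as interior to the nef cone of the quotient; Kleiman's criterion there gives ampleness of $D_k'(c_0)$, and pulling back along the finite map $\pi$ gives ampleness of $D_k(c_0)$. The passage to the quotient is exactly what replaces your averaging argument, and it is not optional: it is what makes the symmetric perturbation hypothesis sufficient.
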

\begin{proof}
Let $\pi:\M_{0,\A} \rightarrow \M_{0,\A}/S_{n} \times S_{m}$ be the quotient morphism for the natural action of $S_{n} \times S_{m}$ on $\M_{0,\A}$. Since $D_{k}(c_0)$ is $S_{n} \times S_{m}$-equivariant, we have
$$
D_{k}(c_0)=\pi^*D_{k}'(c_0)
$$ 
for some divisor class $D_{k}'(c_0)$ on $\M_{0,\A}/S_{n} \times S_{m}$, and it suffices to prove that $D_{k}'(c_0)$ is ample. Since $\psi_\sigma+2\Delta_s$ and the boundary divisors $\Delta_{i,j}$ of $\M_{0,\A}$ generate $\Pic_{\Q}(\M_{0,\A})$, we have that $\Pic_{\Q}(\M_{0,\A}/S_{n} \times S_{m})$ is generated by the images of these equivariant divisors, i.e. the images of $\psi_\sigma+2\Delta_s$ and the boundary divisors $\Delta_{i,j}$. 

By assumption, $D_{k}'(c_0)$ lies on the interior of the nef cone of $\M_{0,\A}/S_{n} \times S_{m}.$ Since $\M_{0,\A}$ is smooth projective, $\M_{0,\A}/S_{n} \times S_{m}$ is projective with $\Q$-factorial singularities, and we may apply Kleiman's criterion to conclude that $D_{k}'(c)$ is ample.

\end{proof}

In order to apply Lemma \ref{L:PerturbationLemma}, we must check that the statements of Proposition \ref{P:Positivity} remain valid when we replace $D_{k}(c)$ by a small perturbation $D_{k}(c)+
\sum_{i,j}\epsilon_{i,j}\Delta_{i,j}$.
\begin{proposition}\label{P:Positivity-perturbed} For any weight vector $\A$ and rational number $\delta>0$, there exists $\epsilon=\epsilon(\delta,k,n,m)$ such that, for any generically smooth 1-parameter family of $\A$-stable curves $(\C \rightarrow B, \sigman, \taum)$ and any $\epsilon_{i} \in \Q \cap [-\epsilon,\epsilon]$, the following inequalities are satisfied.
 \begin{itemize}
\item[1.] If $m=0$, then for $a > \frac{n-1}{(n-k-1)(k+1)}+\delta$ we have
\begin{align*}
a(\psi_{\sigma}.B)+\frac{2a}{n-1}(\Delta_{s}.B)-(\Delta.B)+\sum_{i,j}\epsilon_{i,j}(\Delta_{i,j}.B) > 0.
\end{align*}
\item[2.] If $m=1$ and $n\geq k+2$, then for $a > \frac{n-1}{n(k+1)}+\delta$ we have
\begin{align*}
(a+\frac{1}{n})(\psi_{\sigma}.B)+\frac{2a}{n-1}(\Delta_{s}.B)+(\psi_{\tau}.B)-(\Delta.B) +\sum_{i,j}\epsilon_{i,j}(\Delta_{i,j}.B)> 0.
\end{align*}
\item[3.] If $m\geq 2,\, 2 \leq n \leq k$, then for $a > \delta$ and $b >\delta$ we have
\begin{align*}
(a+\frac{b}{n})(\psi_{\sigma}.B)+\frac{2a}{n-1}(\Delta_{s}.B)+(\psi_{\tau}.B)-(\Delta.B)+\sum_{i,j}\epsilon_{i,j}(\Delta_{i,j}.B) > 0.
\end{align*}
\item[4.] If $m\geq 2,\, n \geq k+1$, then for  $\frac{(k+1)(n-k-1)}{n-1}a+\frac{k+1}{n}b>1+\delta \text{ and }b>1+\delta$ we have
\begin{align*}
(a+\frac{b}{n})(\psi_{\sigma}.B)+\frac{2a}{n-1}(\Delta_{s}.B)+(\psi_{\tau}.B)-(\Delta.B)+\sum_{i,j}\epsilon_{i,j}(\Delta_{i,j}.B)> 0.
\end{align*}
\end{itemize}
\end{proposition}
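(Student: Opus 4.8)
The plan is to re-run the proof of Proposition \ref{P:Positivity} with two modifications: replace each strict inequality by one carrying a definite positive margin coming from $\delta$, and introduce auxiliary functions that track the boundary divisors $\Delta_{i,j}$ through the blow-down sequence, so that the perturbation can be absorbed into that margin.

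\emph{The margin.} In each of the four cases I would choose the coefficients $a_\sigma, a_\tau, a_{\sigma,\tau}, a_\delta$ exactly as in Proposition \ref{P:Positivity}, so that the unperturbed part of the divisor's degree equals $G(0)$ for $G=\sum a_\bullet F_\bullet$, so that $G(N)=0$, and so that $G(l)-G(l+1)=\H(r_1^l,r_2^l)$. The one new observation about the $\delta$-strengthened hypotheses is quantitative: repeating verbatim the vertex computations of Proposition \ref{P:Positivity} — now with $a,b$ exceeding their thresholds by $\delta$, or satisfying the $\delta$-shifted linear inequalities — the minimum of $\H(r_1,r_2)$ over all admissible integer pairs is bounded below by an explicit positive constant $\mu=\mu(\delta,k,n,m)$.

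\emph{The boundary functions.} For each boundary type $(i,j)$ I would set $F_{i,j}(l)$ to be the number of nodes in the fibers of $\C_l\to B$ that separate the marked sections into a group of $i$ weight\nb-$1/k$ and $j$ weight\nb-$1$ sections together with its complement. Since the minimal resolution replaces an $A_d$\nb-node of type $(i,j)$ by a chain of same-type nodes, one gets $F_{i,j}(0)=\Delta_{i,j}.B$ and $F_{i,j}(N)=0$. The structural fact I need is that a single blow-down $\C_l\to\C_{l+1}$ removes exactly one node and decrements exactly one $F_{i,j}$: because the sequence satisfies condition 4, no contracted curve is section-free, so the sections of the contracted leaf ride along each contraction, the contracted node carries a well-defined type $(r_1^l,r_2^l)$, and the separation type of every other node is unchanged. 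Thus $\tilde G:=G+\sum_{i,j}\epsilon_{i,j}F_{i,j}$ has $\tilde G(0)$ equal to the full left-hand side of the desired inequality and $\tilde G(N)=0$, while
\[
\tilde G(l)-\tilde G(l+1)=\H(r_1^l,r_2^l)+\epsilon_{r_1^l,r_2^l}\geq \mu-\epsilon.
\]
Setting $\epsilon:=\epsilon(\delta,k,n,m)=\mu/2$ then forces $\tilde G$ to be strictly decreasing for every admissible choice of the $\epsilon_{i,j}\in[-\epsilon,\epsilon]$, so $\tilde G(0)>\tilde G(N)=0$ whenever $N\geq 1$; when $N=0$ the family is a $\P^1$-bundle, every $\Delta_{i,j}.B$ vanishes, and the claim reduces to Proposition \ref{P:Positivity}.

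The step I expect to be the main obstacle is verifying the bookkeeping behind the functions $F_{i,j}$ rigorously: one must confirm that every node occurring in the (generally non-$\A$-stable) intermediate surfaces $\C_l$ is of genuine boundary type with both sides stable, that $F_{i,j}(0)$ recovers $\Delta_{i,j}.B$ with the correct $A_d$\nb-multiplicities, and that each elementary contraction changes precisely one $F_{i,j}$ and by exactly one. Once this is in place, the positivity argument is a formal repetition of the proof of Proposition \ref{P:Positivity} with $\mu$ replaced by $\mu-\epsilon$.
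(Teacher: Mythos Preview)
Your proposal is correct and follows essentially the same approach as the paper: the paper also introduces the auxiliary functions $F_{i,j}(r)$ counting nodes of each boundary type in the intermediate surfaces $\C_r$, adds $\sum_{i,j}\epsilon_{i,j}F_{i,j}$ to the function $G$, and observes that each step contributes an extra $\epsilon_{r_1,0}$ (or $\epsilon_{r_1,r_2}$) which is absorbed by the $\delta$-margin. Your treatment is in fact more explicit than the paper's about the bookkeeping (the $A_d$-multiplicity in $F_{i,j}(0)$, the fact that exactly one $F_{i,j}$ drops at each step, and the choice $\epsilon=\mu/2$), but the underlying argument is identical.
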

\begin{proof}
The proof is essentially identical to the proof of Proposition \ref{P:Positivity}, so we provide details only for the case $m=0$.
We consider a sequence of birational contractions $\C_0\ra \C_1\ra \dots \ra \C_N$, where $\C_0$ is a minimal desingularization of $\C$ and 
$\C_N$ is a $\P^1$\nb-bundle over $B$. We then have
have 
$$a(\psi_{\sigma}.B)+\frac{2a}{n-1}(\Delta_{s}.B)-(\Delta.B)+\sum_{i,j}\epsilon_{i,j}(\Delta_{i,j}.B)=aF_\sigma(0)-F_\Delta(0)+\sum_{i,j}\epsilon_{i,j}F_{i,j}(0),$$
and it suffices to show that the function $G(r):=aF_\sigma(r)-F_\Delta(r)+\sum_{i,j} \epsilon_{i,j}F_{i,j}(r)$ is a decreasing function of $r$. Here $F_{i,j}:[0,N] \rightarrow \mathbb{Z}$ is the function defined by setting $F_{i,j}(r)$ equal to the number of disconnecting nodes in fibers of the family $\C_r\ra B$ which separate $i$ sections of weight $1/k$ and $j$ sections of weight $1$ from the rest of the sections.
Suppose that the exceptional divisor of $\C_r\ra \C_{r+1}$ meets $r_1\geq k+1$ sections so that we are eliminating a node corresponding to the boundary component $\Delta_{r_1,0}$. Then, using Lemma 3.1 (b), we have for $0<\epsilon<\frac{(n-k-1)(k+1)}{n-1}\delta$:
$$G(r)-G(r+1)=\frac{(r_1+1)(n-r_1)}{n-1}a-1+\epsilon_{r_1,0}>0$$ 
for $a>\frac{(n-1)}{(n-k-1)(k+1)}+\delta$.
\end{proof}

\begin{corollary}\label{C:Positivity-perturbed}
Let $\A$ be a weight vector with $k\geq 2$. There exists $\epsilon:=\epsilon(k,n,m)>0$ and $c_0 \leq \frac{k+2}{2(k+1)}$ such that, for any generically smooth 1-parameter family of 
$\A$-stable curves $(\C\rightarrow B, \{\sigma_j\}_{j=1}^n, \{\tau_j\}_{j=1}^m)$ and any $\epsilon_{i,j} \in \Q \cap [-\epsilon, \epsilon]$,
we have 
$$D_{k}(c_0).B+\sum_{i,j}\epsilon_{i,j}(\Delta_{i,j}.B)>0.$$
\end{corollary}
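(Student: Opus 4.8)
The plan is to prove this as the perturbed analogue of Corollary \ref{C:LowerBound}, repeating \emph{verbatim} the case-by-case translation carried out in Corollary \ref{C:Positivity}, but now feeding from Proposition \ref{P:Positivity-perturbed} in place of Proposition \ref{P:Positivity}. The essential mechanism is already packaged inside Proposition \ref{P:Positivity-perturbed}: each threshold on the parameters $a,b$ appearing in Proposition \ref{P:Positivity} has there been strengthened by an arbitrary slack $\delta>0$, in exchange for being allowed to add a boundary perturbation $\sum_{i,j}\epsilon_{i,j}(\Delta_{i,j}.B)$ with $|\epsilon_{i,j}|\leq\epsilon(\delta,k,n,m)$. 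Consequently the only work left to the corollary is to exhibit, for the given weight vector $\A=\A_{n,m}^{k}$ (which places us in exactly one of the four regimes), a single rational $c_0\leq\frac{k+2}{2(k+1)}$ at which the relevant threshold is met with room to spare, and then to read off the corresponding $\epsilon$.

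First I would record the linear substitution that converts $D_k(c_0)$ into the precise shape to which Proposition \ref{P:Positivity-perturbed} applies. This is exactly the substitution used in Corollary \ref{C:Positivity}: setting
$$
a=(n-1)\left(c_0-\tfrac12\right),\qquad b=n\left(\tfrac{n-1}{2}-(n-2)c_0\right),
$$
one checks that $a+\tfrac{b}{n}=c_0$ and $\tfrac{2a}{n-1}=2c_0-1$, so that
$$
D_k(c_0)=\left(a+\tfrac{b}{n}\right)\psi_\sigma+\tfrac{2a}{n-1}\Delta_s+\psi_\tau-\Delta
$$
identically. The regime $m=0$ is the specialization $b=0$, forcing $c_0=\frac{n-1}{2(n-2)}$; the regime $m=1$ is the specialization $b=1$, forcing $c_0=\frac{n+1}{2n}$; for $m\geq2$ both $a$ and $b$ remain free and $c_0$ ranges over an interval. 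In each of these regimes, provided $(n,m)\neq(k+1,1)$, Corollary \ref{C:LowerBound} already produces a value $c_0<\frac{k+2}{2(k+1)}$ satisfying the \emph{strict} unperturbed inequalities, so the resulting $a$ (and $b$) exceed their thresholds by a definite positive amount. I would take $\delta$ to be this excess and $\epsilon=\epsilon(\delta,k,n,m)$ as furnished by Proposition \ref{P:Positivity-perturbed}; the perturbed inequality $D_k(c_0).B+\sum_{i,j}\epsilon_{i,j}(\Delta_{i,j}.B)>0$ then holds for all $|\epsilon_{i,j}|\leq\epsilon$ and all generically smooth $\A$-stable families, which is exactly the conclusion sought.

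The main obstacle is the boundary case $(n,m)=(k+1,1)$, where the estimate is genuinely tight and no slack is available. There the unperturbed divisor $D_k\!\left(\frac{k+2}{2(k+1)}\right)$ already has degree \emph{exactly} zero on every generically smooth family (the last case of Corollary \ref{C:Positivity}), which is precisely why Proposition \ref{P:Positivity-perturbed}(2.) is stated only for $n\geq k+2$. In this regime any generically smooth $\A$-stable family is forced to be a $\P^1$-bundle, the nodal boundary divisors $\Delta_{i,j}$ are empty, and the perturbation term is therefore vacuous; thus strict positivity cannot be extracted from the perturbation estimate and one is left with the bare equality at $c_0=\frac{k+2}{2(k+1)}$, exactly as in Corollary \ref{C:LowerBound}. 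I would accordingly isolate this case and handle it separately in the ampleness argument, via the reduction-morphism pullback used in Theorem \ref{T:MainTheorem}, where $D_k\!\left(\frac{k+2}{2(k+1)}\right)$ is realized as the pullback of an ample class from the next weighted space, rather than attempt to force it through the perturbation bound.
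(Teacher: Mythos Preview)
Your approach is the same as the paper's: run the case-by-case analysis of Corollary \ref{C:Positivity} again, but invoking Proposition \ref{P:Positivity-perturbed} in place of Proposition \ref{P:Positivity}, and observe that in each regime (away from the exceptional one) the unperturbed threshold is beaten by a definite margin, so a small $\delta$ and the corresponding $\epsilon(\delta,k,n,m)$ do the job. The paper only spells out the $m=0$ case and leaves the rest as ``a slight modification,'' which is exactly what you do.

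Your treatment of the exceptional case $(n,m)=(k+1,1)$ is more careful than the paper's. You are right that Proposition \ref{P:Positivity-perturbed}(2.) is stated only for $n\geq k+2$, that for $(n,m)=(k+1,1)$ every generically smooth $\A$-stable family is a $\P^1$-bundle, that there are no nodal boundary divisors $\Delta_{i,j}$, and that one obtains only $D_k\!\left(\frac{k+2}{2(k+1)}\right).B=0$. The strict inequality in the corollary genuinely fails here; the paper's terse proof does not address this. However, the discrepancy is harmless downstream: in Proposition \ref{P:Ampleness} the paper only invokes \emph{non-negativity} (``has non-negative intersection''), and since the set of $\Delta_{i,j}$ is empty the perturbation is vacuous anyway, so Lemma \ref{L:PerturbationLemma} is trivially applicable in this regime. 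Your instinct to isolate this case is sound; just note that your description of the fix (``pullback of an ample class from the next weighted space'') is slightly off---in Theorem \ref{T:MainTheorem} the relevant identity is $\phi^*D_k\!\left(\frac{k+1}{2k}\right)=D_{k-1}\!\left(\frac{k+1}{2k}\right)$, pulling back from the \emph{previous} space, not pushing to $\M_{0,\A^{k+1}}$.
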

\begin{proof}
The proof is a slight modification of the proof of Corollary \ref{C:Positivity}. Therefore, we will only
provide details for the case $m=0$. 

Suppose $m=0$. For $\delta$ sufficiently small, 
$c_0=\frac{n-1}{2(n-2)}>\frac{n-1}{(n-k-1)(k+1)}+\delta$ and so by  Proposition \ref{P:Positivity-perturbed} (1.) there exist $\epsilon=\epsilon(\delta,k,n,m)$ such that
\begin{align*}
&(D_k(c_0)+\sum_{i,j}\epsilon_{i,j}\Delta_{i,j}).B =
c_0(\psi_{\sigma}.B)+\frac{2c_0}{n-1}(\Delta_{s}.B)-(\Delta.B)+\sum_{i,j}\epsilon_{i,j}(\Delta_{i,j}.B) > 0,
\end{align*}
for all $|\epsilon_{i,j}|\leq \epsilon$.
It remains to observe that $c_0\leq \frac{k+2}{2(k+1)}$.

\end{proof}

Following the arguments of Theorem \ref{T:MainTheorem}, we obtain

\begin{proposition}\label{P:Ampleness} For $c \in (\frac{k+2}{2k+2}, \frac{k+1}{2k})$, the divisor $D_{k}(c)$ is ample on $\M_{0,\A}$.
\end{proposition}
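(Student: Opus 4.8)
The plan is to derive ampleness from Lemma \ref{L:PerturbationLemma}, whose hypothesis requires an $\epsilon>0$ so that every small boundary perturbation $D_{k}(c)+\sum_{i,j}\epsilon_{i,j}\Delta_{i,j}$, with $c$ near $c_0$ and $|\epsilon_{i,j}|\le\epsilon$, has positive degree on \emph{every} complete $1$-parameter family of $\A$-stable curves. Fixing $c_0\in(\frac{k+2}{2k+2},\frac{k+1}{2k})$, I would mirror the proof of Theorem \ref{T:MainTheorem}: interpolate between the upper endpoint $\frac{k+1}{2k}$, where positivity on all curves is already known, and the perturbed lower threshold furnished by Corollary \ref{C:Positivity-perturbed}, and then promote positivity from generically smooth families to arbitrary families by a boundary induction.

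Concretely, first invoke Corollary \ref{C:Positivity-perturbed} to obtain $\epsilon>0$ and a threshold $c_1\le\frac{k+2}{2(k+1)}$ such that $D_{k}(c_1)+\sum_{i,j}\epsilon_{i,j}\Delta_{i,j}$ has positive degree on every generically smooth family, for all $|\epsilon_{i,j}|\le\epsilon$. By Theorem \ref{T:MainTheorem}(a), $D_{k}(\frac{k+1}{2k})$ has positive degree on every curve. Since $D_{k}$ is affine-linear in $c$, for $c=\lambda\frac{k+1}{2k}+(1-\lambda)c_1$ one has the decomposition
\[
D_{k}(c)+\sum_{i,j}\epsilon_{i,j}\Delta_{i,j}=\lambda\,D_{k}\!\left(\tfrac{k+1}{2k}\right)+(1-\lambda)\left(D_{k}(c_1)+\tfrac{1}{1-\lambda}\sum_{i,j}\epsilon_{i,j}\Delta_{i,j}\right).
\]
Because $c_0$ is interior to the interval and $c_1\le\frac{k+2}{2k+2}<c_0$, the weight $\lambda$ stays bounded away from both $0$ and $1$ as $c$ ranges over a small neighborhood of $c_0$; shrinking to some $\epsilon'\le\epsilon$, the rescaled coefficients $\frac{1}{1-\lambda}\epsilon_{i,j}$ remain in $[-\epsilon,\epsilon]$. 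Both summands then have positive degree on every generically smooth family, hence so does the perturbed divisor, for all $c\in[c_0-\epsilon',c_0+\epsilon']$ and all $|\epsilon_{i,j}|\le\epsilon'$.

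The remaining task is to extend this positivity to arbitrary $1$-parameter families, i.e.\ to prove the perturbed analogue of Lemma \ref{L:BoundaryInduction} by restricting to a boundary stratum $\prod_{l}\M_{0,\A_l}$ and inducting on the number of components. I expect this to be the main obstacle: unlike $\psi_{\tau}-\Delta$, $\psi_\sigma$ and $\Delta_s$, the individual classes $\Delta_{i,j}$ do \emph{not} restrict to classes of the same form on the factors, since the summand of $\Delta_{i,j}$ equal to the gluing divisor itself contributes a $\psi$-type class from the normal bundle. The resolution is quantitative rather than formal: because $c_0$ lies strictly inside the ample range, the unperturbed restriction $D_{k}(c)|_{\M_{0,\A_l}}$ is strictly positive on every curve of each factor with a margin uniform in the finitely many strata and weight data, so for $\epsilon'$ small the bounded normal-bundle corrections together with the restricted perturbation are absorbed and positivity persists. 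With positivity on all families established, Lemma \ref{L:PerturbationLemma} yields that $D_{k}(c_0)$ is ample, completing the proof.
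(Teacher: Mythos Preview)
Your approach matches the paper's: reduce to Lemma~\ref{L:PerturbationLemma}, handle generically smooth families by interpolating between Corollary~\ref{C:Positivity-perturbed} at $c_1\le\frac{k+2}{2k+2}$ and Theorem~\ref{T:MainTheorem} at $\frac{k+1}{2k}$, and then deal with reducible-fiber families by restricting to boundary strata. The interpolation step is fine and essentially identical to the paper's.

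The gap is in the boundary step. You assert that ``the unperturbed restriction $D_k(c)|_{\M_{0,\A_l}}$ is strictly positive on every curve of each factor with a margin uniform in the finitely many strata.'' But strict positivity on all curves, which is all Theorem~\ref{T:MainTheorem} gives you, does \emph{not} furnish a uniform margin: the cone of curves on a factor need not be polyhedral, and nothing rules out a sequence of curves $B_n$ with $D_k(c).B_n\to 0$ while some $\Delta_{i,j}.B_n$ stays bounded away from zero. What you actually need is that $D_k(c)$ lies in the \emph{interior} of the nef cone of each factor $\M_{0,\A_l}$---in other words, that $D_k(c)$ is ample there. That is the very proposition you are proving.

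The paper closes this circularity by making the argument an explicit induction on $\dim\M_{0,\A}$: assuming $D_k(c)$ is ample on every $\M_{0,\A'}$ with $\dim\M_{0,\A'}<\dim\M_{0,\A}$, the restriction of the perturbed divisor to any proper boundary stratum is a small perturbation of an ample divisor on each factor, hence remains nef there. Your ``uniform margin'' is exactly this induction hypothesis in disguise; once you state the induction explicitly, your proof and the paper's coincide.
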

\begin{remark}
The divisor $D_k(c)$ is numerically proportional to the divisor class
$\psi-2\Delta+\alpha(\Delta+\Delta_s)$ for $\alpha=2-1/c$ by Lemma \ref{L:PushforwardClass}. Therefore 
this proposition together with Proposition \ref{P:Ampleness2} establish the Main Result  of Section 1 
and its Corollary, thus providing an unconditional proof of Simpson's Theorem \ref{T:Simpson}.

\end{remark}
\begin{proof}
We fix $k$ and proceed by induction on $\dim \M_{0,\A}$. The case of $\dim \M_{0,\A}=0$ is trivial. Suppose the statement is established 
for all weight vectors $\A'=\A^k_{n,m}$ with $\dim \M_{0,\A'}<\dim \M_{0,\A}$. 

By Lemma \ref{L:PerturbationLemma}, it suffices to show that there exists an $\epsilon>0$ such that $D_{k}(c')+\sum_{i,j}\epsilon_{i,j}\Delta_{i,j}$ has non-negative intersection on any 1-parameter family of $\A$-stable curves for all $\epsilon_{i,j} \in [-\epsilon, \epsilon] \cap \Q$ and all
$c'\in \Q\cap [c-\epsilon, c+\epsilon]$. 

By the induction assumption and a slight 
variation of the proof of Lemma \ref{L:BoundaryInduction}, $D_{k}(c')+\sum_{i,j}\epsilon_{i,j}\Delta_{i,j}$ has non-negative degree on any complete 1-parameter family with reducible generic fiber. It remains 
to show that $D_k(c')+\sum_{i,j}\epsilon_{i,j}\Delta_{i,j}$ has non-negative degree on any complete 
1-parameter family with smooth general fiber. 

By Corollary \ref{C:Positivity-perturbed}, there exists $c_{0} \leq \frac{k+2}{2k+2}$ and $\epsilon'>0$ such that $D_{k}(c_0)+\epsilon_{i,j}'\Delta_{i,j}$ has non-negative intersection on any generically smooth 1-parameter family of $\A$-stable curves for all $\epsilon_{i,j}' \in [-\epsilon',\epsilon'] \cap \Q$. By Theorem \ref{T:MainTheorem}, $D_{k}(\frac{k+1}{2k})$ has positive intersection on any generically smooth 1-parameter family of $\A$-stable curves . For any $c' \in [c-\epsilon, c+\epsilon]\subset (\frac{k+2}{2k+2}, \frac{k+1}{2k})$, there exists $0<\lambda<1$ such that
$$
D_{k}(c')=\lambda (D_{k}(c_0))+(1-\lambda)D_{k}\left(\frac{k+1}{2k}\right).
$$
Clearly, all such $\lambda$ are bounded below by some positive number $\lambda_0$.
Furthermore,  for any $\epsilon_{i,j} \in [-\epsilon'\lambda, \epsilon'\lambda]$, we can write
$$
D_{k}(c')+\sum_{i,j}\epsilon_{i,j}\Delta_{i,j}=\lambda (D_{k}(c_0)+\sum_{i,j}\epsilon_{i,j}'\Delta_{i,j})+(1-\lambda)D_{k}\left(\frac{k+1}{2k}\right),
$$
where $\epsilon_{i,j}' \in [-\epsilon', \epsilon'] \cap \Q$. It follows that $D_{k}(c')+\sum_{i,j}\epsilon_{i,j}\Delta_{i,j}$ has non-negative intersection on any generically smooth 1-parameter family of $\A$-stable curves. Using $\epsilon=\lambda_0 \cdot \epsilon'$ gives the desired result.
\end{proof}

It remains to check ampleness at the endpoint $c=\frac{k+1}{2k}$.
\begin{proposition}\label{P:Ampleness2}
For any weight vector $\A$, $D_k(\frac{k+1}{2k})$ is ample on $\M_{0,\A}$.
\end{proposition}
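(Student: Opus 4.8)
The plan is to show that $D_{k}\!\left(\frac{k+1}{2k}\right)$ lies in the \emph{interior} of the nef cone of $\M_{0,\A}$ and then invoke Kleiman's criterion, exactly as in Proposition \ref{P:Ampleness}. Note that $\M_{0,\A}$ is already known to be projective, since Proposition \ref{P:Ampleness} exhibits an ample class $D_{k}(c)$ for $c$ in the nonempty open interval $\left(\frac{k+2}{2k+2},\frac{k+1}{2k}\right)$; thus Kleiman applies in its usual form and I need only check that $D_{k}\!\left(\frac{k+1}{2k}\right)$ survives a small perturbation in every direction of $\Pic_{\Q}(\M_{0,\A})$. The proof of Lemma \ref{L:PerturbationLemma} never uses the restriction $c_{0}<\frac{k+1}{2k}$, so it applies verbatim with $c_{0}=\frac{k+1}{2k}$: it suffices to produce $\epsilon>0$ such that $D_{k}(c)+\sum_{i,j}\epsilon_{i,j}\Delta_{i,j}$ has positive degree on every complete $1$-parameter family of $\A$-stable curves, for all $|\epsilon_{i,j}|\le\epsilon$ and all $c\in\left[\frac{k+1}{2k}-\epsilon,\frac{k+1}{2k}+\epsilon\right]$. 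Equivalently, I would strengthen Proposition \ref{P:Ampleness} to ampleness on the open interval $\left(\frac{k+2}{2k+2},\frac{k+1}{2k}+\delta\right)$ for some $\delta>0$, so that $\frac{k+1}{2k}$ becomes an interior point.

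The anchoring input is the pullback identity already established in the proof of Theorem \ref{T:MainTheorem}: for the reduction $\phi:\M_{0,\A_{n,m}^{k-1}}\rightarrow\M_{0,\A_{n,m}^{k}}$ one has $\phi^{*}D_{k}\!\left(\frac{k+1}{2k}\right)=D_{k-1}\!\left(\frac{k+1}{2k}\right)$, and since $\frac{k+1}{2k}=\frac{(k-1)+2}{2(k-1)+2}$, Theorem \ref{T:MainTheorem}(b) at level $k-1$ shows this pullback is nef with degree zero precisely on $\phi$-contracted curves; in particular $D_{k}\!\left(\frac{k+1}{2k}\right)$ has positive degree on every curve by Theorem \ref{T:MainTheorem}(a). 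I would dispose of two easy cases first. For $k=1$ the claim is Theorem \ref{T:ClassicalCase}, since $\frac{k+1}{2k}=1>2/3$. For $k=2$ the coefficient of the exceptional divisor $F$ in $\phi^{*}D_{2}(c)$, namely $-ck+(2c-1)\binom{k}{2}+1$, vanishes identically, so $\phi^{*}D_{2}\!\left(\frac34\right)=D_{1}\!\left(\frac34\right)$ is ample on $\M_{0,n+m}$ by Theorem \ref{T:ClassicalCase}, and ampleness descends along the surjective morphism $\phi$. For $k\ge 3$ I would run the dimension induction of Proposition \ref{P:Ampleness}: the boundary-restriction argument of Lemma \ref{L:BoundaryInduction} reduces the perturbed positivity on families with reducible general fiber to the same statement on the lower-dimensional factors $\M_{0,\A_{n_{i},m_{i}}^{k}}$, which is the inductive hypothesis, while for families with smooth general fiber and $c\le\frac{k+1}{2k}$ I would write $D_{k}(c)$ as a convex combination of the class $D_{k}(c_{0})$ of Corollary \ref{C:Positivity-perturbed} (with $c_{0}\le\frac{k+2}{2k+2}$) and $D_{k}\!\left(\frac{k+1}{2k}\right)$, absorbing $\sum_{i,j}\epsilon_{i,j}\Delta_{i,j}$ via the uniform $\epsilon$ of Proposition \ref{P:Positivity-perturbed}.

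The hard part is the genuinely new feature: landing in the \emph{interior} of the nef cone forces me to perturb the parameter $c$ \emph{upward}, past $\frac{k+1}{2k}$, and the convex-combination trick is then unavailable because $\frac{k+1}{2k}+\epsilon$ no longer lies between $c_{0}$ and $\frac{k+1}{2k}$. On families with smooth general fiber I expect this to be resolved by the inequality $P.B\ge 0$ for $P:=\psi_{\sigma}+2\Delta_{s}$, which follows from $F_{\sigma}(0)=\psi_{\sigma}.B+\frac{2}{n-1}\Delta_{s}.B\ge 0$ (the function $F_{\sigma}$ of Lemma \ref{L:Main} is non-increasing with $F_{\sigma}(N)=0$); writing $D_{k}(c)=D_{k}\!\left(\frac{k+1}{2k}\right)+\left(c-\frac{k+1}{2k}\right)P$ shows the degree can only increase for $c>\frac{k+1}{2k}$, and the open thresholds of Proposition \ref{P:Positivity} confirm the perturbed inequalities persist on a two-sided $c$-neighborhood. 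On families with reducible general fiber the upward perturbation is handled by carrying the stronger open-interval statement through the dimension induction, so the boundary factors supply nefness slightly past the endpoint as well. Choosing $\epsilon$ below all the resulting uniform bounds then verifies the hypothesis of Lemma \ref{L:PerturbationLemma} and yields ampleness of $D_{k}\!\left(\frac{k+1}{2k}\right)$.
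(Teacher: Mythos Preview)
Your reduction to checking nefness of an upward perturbation $D_{k}\!\left(\tfrac{k+1}{2k}+\epsilon\right)$ is exactly the right move and matches the paper, but your mechanism for the upward step on generically smooth families has a genuine gap. You claim $P.B\ge 0$ for $P=\psi_{\sigma}+2\Delta_{s}$ because $F_{\sigma}(0)\ge 0$; however $F_{\sigma}(0)=\psi_{\sigma}.B+\tfrac{2}{n-1}\Delta_{s}.B$, not $\psi_{\sigma}.B+2\Delta_{s}.B$, so the inference is a non sequitur for $n\ge 3$. In fact the inequality you want is false for $k\ge 3$: take any curve $B'\subset\M_{0,m+1}=\M_{0,\A^{k}_{0,m+1}}$ with $\psi_{\tau_{m+1}}.B'>0$ and set $B=\chi(B')\subset\M_{0,\A^{k}_{k,m}}$; by Lemma \ref{L:PullbackX} one has $\chi^{*}(\psi_{\sigma}+2\Delta_{s})=-k(k-2)\psi_{\tau_{m+1}}$, so $(\psi_{\sigma}+2\Delta_{s}).B<0$ even though $B$ has smooth general fiber. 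Thus $D_{k}(c).B$ can decrease as $c$ crosses $\tfrac{k+1}{2k}$, and neither Proposition \ref{P:Positivity} nor Corollary \ref{C:Positivity-perturbed} supplies a second anchor above $\tfrac{k+1}{2k}$ to salvage the convex-combination argument.

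The paper does not go through Lemma \ref{L:PerturbationLemma} at this endpoint at all. Since $D_{k}(c)$ is already ample for $c$ slightly below $\tfrac{k+1}{2k}$, it suffices to exhibit one $\epsilon>0$ with $D_{k}\!\left(\tfrac{k+1}{2k}+\epsilon\right)$ merely \emph{nef}; then $D_{k}\!\left(\tfrac{k+1}{2k}\right)$, being a convex combination of an ample and a nef class, is ample. Nefness is checked by a case split on curves $B\subset\M_{0,\A^{k}_{n,m}}$: if $B\subset\phi(\Exc\phi)$, then $k$ of the $\sigma$-sections coincide identically and $B$ lies in the image of the replacement morphism $\chi:\M_{0,\A^{k}_{n-k,m+1}}\to\M_{0,\A^{k}_{n,m}}$; Lemma \ref{L:PullbackX} then shows $\chi^{*}D_{k}\!\left(\tfrac{k+1}{2k}+\epsilon\right)$ is an $O(\epsilon)$-perturbation of $D_{k}\!\left(\tfrac{k+1}{2k}\right)$, which is ample on the lower-dimensional space by the dimension induction. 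If $B\not\subset\phi(\Exc\phi)$, lift $B$ to its $\phi$-transform $B'\subset\M_{0,\A^{k-1}_{n,m}}$ and use Lemma \ref{L:Pullback}: $\phi^{*}D_{k}\!\left(\tfrac{k+1}{2k}+\epsilon\right)=D_{k-1}\!\left(\tfrac{k+1}{2k}+\epsilon\right)+\epsilon k(k-2)E$, where the first term is ample by Proposition \ref{P:Ampleness} at level $k-1$ (since $\tfrac{k+1}{2k}+\epsilon$ lies in the open interval there) and $E.B'\ge 0$ since $B'\not\subset E$. The $\chi$-branch is precisely what disposes of the counterexample above, and it is the missing ingredient in your proposal.
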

\begin{proof}

We proceed by induction on $\dim \M_{0,\A}$. Fix a weight vector $\A$, and assume that the given statement holds for all weight vectors $\A'$ satisfying $\dim \M_{0,\A'}<\dim \M_{0,\A}$. By Proposition \ref{P:Ampleness}, divisors $D_k(c)$ is ample for 
$c\in \left(\frac{k+2}{2k+2},\frac{k+1}{2k}\right)$. To show that $D_{k}(\frac{k+1}{2k})$ is ample, it suffices to exhibit a rational number $\epsilon>0$ such that $D_k(\frac{k+1}{2k}+\epsilon)$ is nef.

We will show that for small enough $\epsilon$ and any compete curve $B \subset \M_{0,\A}$, we have $D_k(\frac{k+1}{2k}+\epsilon).B \geq 0$. If
$$ \phi:\M_{0,\A_{n,m}^{k-1}} \rightarrow \M_{0,\A_{n,m}^{k}}$$
is the natural reduction morphism, we will consider separately the cases where $B \subset \phi(\Exc(\phi))$ and $B \not\subset \phi(\Exc(\phi)).$ Suppose first that $B \subset \phi(\Exc(\phi))$. In this case, $k$ of the sections $\sigman$ are coincident on the corresponding family of $\A$-stable curves. It follows that $B$ lies in the image of  the closed immersion $\chi: \M_{0,\A_{n-k,m+1}^k}\ra \M_{0,\A_{n,m}^k}$, which replaces the $(m+1)^{\text{st}}$ section of weight one with the $k$ coincident sections of weight $1/k$. By Lemma \ref{L:PullbackX}, we have

$$\chi^*D_k\left(\frac{k+1}{2k}+\epsilon\right)=D_k\left(\frac{k+1}{2k}\right)-\epsilon k(k-2) \psi_{\tau_{m+1}}.$$
Since  $\dim \M_{0,\A_{n-k,m+1}^k}<\dim \M_{0,\A}$, the induction hypothesis implies that 
$D_k(\frac{k+1}{2k})$ is ample on $\M_{0,\A_{n-k,m+1}^k}$. There exists $\epsilon>0$ sufficiently small so that $D_k(\frac{k+1}{2k})-\epsilon k(k-2) \psi_{\tau_{m+1}}$ is still ample on $\M_{0,\A_{n-k,m+1}^k}$, and for this choice of $\epsilon$, we have $D_k\left(\frac{k+1}{2k}+\epsilon\right).B \geq 0$ as desired.

Next, suppose that $B \not\subset \phi(\Exc(\phi)).$  Let $B'$ denote the $\phi$-transform of $B$ on $\M_{0,\A_{n,m}^{k-1}}$ so that
$$
D_k\left(\frac{k+1}{2k}+\epsilon\right).B=\phi^*D_k\left(\frac{k+1}{2k}+\epsilon\right).B'.
$$
By Lemma \ref{L:Pullback},  we have
$$\phi^*D_k\left(\frac{k+1}{2k}+\epsilon\right)=D_{k-1}\left(\frac{k+1}{2k}+\epsilon\right)+\epsilon k(k-2)E,$$
where $E$ is the union of the exceptional divisors of $\phi$. Since $\frac{k+1}{2k}+\epsilon=\frac{(k-1)+2}{2(k-1)+2}+\epsilon$, Proposition \ref{P:Ampleness} implies that for sufficiently small $\epsilon$,
$D_{k-1}(\frac{k+1}{2k}+\epsilon).B'>0$. Since $B'$ is not contained in $E$, 
we also have $E.B'>0$. It follows that $D_k\left(\frac{k+1}{2k}+\epsilon\right).B \geq 0$, as desired.
\end{proof}

\subsection{Kleiman's criterion on an algebraic space}\label{S:Kleiman}
In the proof of Proposition \ref{P:Ampleness}, we used the fact that $\M_{0,\A}$ (or rather the quotient $\M_{0,\A}/S_{n} \times S_{m}$) is a scheme when we invoked Kleiman's criterion. Since one often encounters situations where one would like to prove projectivity of a moduli space without knowing \emph{a priori} that it is a scheme, it seems worth pointing out that our method can be used to prove projectivity by means of the following lemma.

The reader may consult (\cite{Kollar2}, VI.2) for a basic treatment of intersection theory which is applicable on algebraic spaces. The proof of the following lemma is nothing more than a logical rehashing of the proof that Nakai's criterion (\cite{Kollar2}, VI, 2.18) implies Kleiman's criterion (\cite{Kollar2}, VI, 2.19).
\begin{lemma}[Kleiman's criterion on algebraic spaces]\label{L:Kleiman}
Suppose that $X$ is a algebraic space, proper over an algebraically closed field. Suppose $X$ has the property that, for any subvariety $Z \subset X$, there exists an effective Cartier divisor $E$ such that $E$ meets $Z$ properly. Then Kleiman's criterion holds for $X$, i.e. any divisor $D$ which lies in the interior of the nef cone of $X$ is ample.
\end{lemma}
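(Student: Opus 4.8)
The plan is to adapt the standard scheme-theoretic argument that Nakai's criterion implies Kleiman's criterion (\cite{Kollar2}, VI, 2.18--2.19), observing that it only uses the hypothesis we have imposed, namely the existence of enough Cartier divisors meeting each subvariety properly. First I would recall the setup: $D$ lies in the interior of the nef cone, and I wish to conclude that $D$ is ample. On a proper algebraic space, an ample divisor can be detected by the Nakai--Moishezon criterion, so it suffices to show that $D^{\dim Z}.Z > 0$ for every subvariety $Z \subset X$. The argument then proceeds by induction on $\dim Z$.

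For the inductive step, fix a subvariety $Z$ of dimension $d$ and assume the positivity of $D^{d'}.Z'$ has been established for all subvarieties $Z'$ of dimension $d' < d$. Here is where the hypothesis enters: by assumption there exists an effective Cartier divisor $E$ meeting $Z$ properly, so $E|_Z$ is an effective Cartier divisor on $Z$ whose support has dimension $d-1$. Writing $E|_Z = \sum a_i E_i$ as a sum of its components with positive multiplicities, one considers the self-intersection $D^{d}.Z$ and compares it to $D^{d-1}.E.Z = \sum a_i (D^{d-1}.E_i)$. By the induction hypothesis applied to each $E_i$, this latter quantity is positive. The remaining point is to control the difference $D^{d}.Z - t\, D^{d-1}.E.Z$ as one scales, which one handles exactly as in the scheme case: since $D$ is nef, the intersection numbers $D^{j}.H.Z$ are non-negative for any ample (or here, effective Cartier) auxiliary class, and an elementary argument with the polynomial $\chi(nD - E)$ or with the openness of the nef cone shows $D^{d}.Z$ cannot vanish without contradicting $D$ being interior to the nef cone.

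The key structural remark is that all of the intersection theory used---the projection formula, the fact that nef divisors have non-negative intersection with effective cycles, and the numerical characterization of ampleness via Nakai--Moishezon---is available on proper algebraic spaces over an algebraically closed field, per the treatment in (\cite{Kollar2}, VI.2). In particular, Nakai's criterion (\cite{Kollar2}, VI, 2.18) is valid in this generality, and the only place where the scheme hypothesis is normally invoked in passing from Nakai to Kleiman is the construction of a divisor cutting down the dimension of $Z$; this is precisely what our hypothesis supplies. Thus the proof is, as advertised, a logical rehashing: one simply traces through the reference's derivation of Kleiman's criterion from Nakai's, and at each step where a hyperplane section or general effective divisor through $Z$ would be used on a projective scheme, one substitutes the Cartier divisor $E$ furnished by the hypothesis.

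The main obstacle I anticipate is purely bookkeeping rather than conceptual: one must verify that the divisor $E$ meeting $Z$ properly genuinely allows the induction to proceed, i.e. that restricting to the components of $E \cap Z$ strictly decreases dimension and that the numerical positivity is preserved under restriction. Because $E$ meets $Z$ properly, $E \cap Z$ is a proper closed subspace of dimension $d-1$, and each of its components is a legitimate subvariety to which the induction hypothesis applies; the subtlety is only to ensure that the multiplicities $a_i$ are positive and that no cancellation occurs, which follows from effectivity of $E$. Once this is in place, the concluding inequality $D^{d}.Z > 0$ follows formally, completing the induction and hence the proof that $D$ is ample.
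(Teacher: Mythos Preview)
Your proposal is correct and follows essentially the same route as the paper: induct on $\dim Z$, use the hypothesized effective Cartier divisor $E$ to cut down dimension, and conclude via Nakai's criterion. The one place where your write-up is vaguer than the paper is the comparison step; the paper dispatches it in one line by observing that since $D$ is interior to the nef cone there is $\epsilon>0$ with $D-\epsilon E$ nef, whence $D^{d-1}(D-\epsilon E).[Z]\geq 0$ and so $D^{d}.[Z]\geq \epsilon\, D^{d-1}.E.[Z]>0$ by induction---this is exactly your ``openness of the nef cone'' option made explicit, and the $\chi(nD-E)$ alternative you mention is unnecessary.
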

\begin{proof}
Suppose that $D \subset X$ is a Cartier divisor which lies in the interior of the nef cone of $X$. To prove that $D$ is ample, it suffices to show that $D^{k}.[Z] >0$ for an arbitrary $k$-dimensional subvariety $Z \subset X$.

Given a $k$-dimensional subvariety $Z \subset X$, our hypothesis gives an effective Cartier divisor $E \subset X$, such that
$$
E.[Z]=\sum_{i}a_i[Z_i],
$$
with each $Z_{i} \subset X$ a $(k-1)$-dimensional subvariety, and each $a_{i}>0$. Since $D$ lies on the interior of the nef cone, there exists a rational number $\epsilon>0$ such that $D-\epsilon E$  is nef. Now we have
 $$D^{k-1}(D-\epsilon E).[Z] \geq 0,$$
since $D$ and $D-\epsilon E$ are nef (\cite{Kollar2}, VI, 2.18.7.3).  It follows that
 $$
 D^{k}.[Z] \geq \epsilon D^{k-1}[E. Z]=\epsilon \sum_{i}a_iE.[Z_i]> 0,
 $$ by induction on the dimension of $Z$.
\end{proof}

In order to prove that Kleiman's criterion holds for $\M_{0,\A}$ without knowing a priori that $\M_{0,\A}$ is a scheme, it suffices to check that $\M_{0,\A}$ satisfies the hypothesis of Lemma \ref{L:Kleiman}. (Technically, we applied Kleiman's criterion to the quotient $\M_{0,\A}/S_{n} \times S_{m}$, but it is clear that if $\M_{0,\A}$ satisfies the hypothesis of Lemma \ref{L:Kleiman}, then so does $\M_{0,\A}/S_{n} \times S_{m}$.)

If  $Z \subset \M_{0,\A}$ is an arbitrary positive-dimensional subvariety, the general point of $Z$ lies in the interior of some boundary stratum
$$
\M_{0,\A_1} \times \ldots \times \M_{0,\A_k} \subset \M_{0,\A}, 
$$

Since the interior $M_{0,n_1} \times \ldots \times M_{0,n_k} \subset \M_{0,\A_1} \times \ldots \times \M_{0,\A_k}$ is affine, $Z$ must meet some irreducible component of the boundary of $\M_{0,\A_1} \times \ldots \times \M_{0,\A_k}$. Since every  irreducible component of the boundary of $\M_{0,\A_1} \times \ldots \times \M_{0,\A_k}$ is the restriction of a boundary divisor on the ambient space $\M_{0,\A}$, all of which are Cartier, we are done.

\end{document}